\theoremstyle{plain}
\newtheorem{theor}{Theorem}
\theoremstyle{remark}
\newtheorem{rem}{Remark}
\theoremstyle{plain}
\newtheorem{prop}[theor]{Proposition}
\newtheorem{lemma}[theor]{Lemma}
\def\R{{\mathbb R}}
\def\P{{\mathbb P}}
\def\N{{\mathbb N}}
\def\Net{{\mathcal N}}
\def\Exp{{\mathbb E}}
\def\M{{\mathcal M}}
\def\Proj{{\rm Proj}}
\begin{document}

\title{The limit of the smallest singular value\\of random matrices with i.i.d.\ entries}
\author{Konstantin Tikhomirov\\
\small Department of Mathematical and Statistical Sciences, University of Alberta\\
\small Edmonton, Alberta, T6G 2G1, Canada}

\maketitle

\begin{abstract}
Let $\{a_{ij}\}$ $(1\le i,j<\infty)$ be i.i.d.\ real valued random variables with zero mean and unit variance
and let an integer sequence $(N_m)_{m=1}^\infty$ satisfy $m/N_m\longrightarrow z$ for some $z\in(0,1)$.
For each $m\in\N$ denote by $A_m$ the $N_m\times m$ random matrix $(a_{ij})$ $(1\le i\le N_m,1\le j\le m)$
and let $s_{m}(A_m)$ be its smallest singular value.
We prove that the sequence $\bigl({N_m}^{-1/2} s_{m}(A_m)\bigr)_{m=1}^\infty$ converges to $1-\sqrt{z}$ almost surely.
Our result does not require boundedness of any moments of $a_{ij}$'s higher than the $2$-nd and
resolves a long standing question regarding the weakest moment assumptions on the distribution of the entries
sufficient for the convergence to hold.
\end{abstract}

\section{Introduction}

For $N\ge m$ and an $N\times m$ real-valued matrix $B$, its {\it singular values} $s_1(B)$, $s_2(B),\dots$, $s_m(B)$
are the eigenvalues of the matrix $\sqrt{B^T B}$ arranged in non-increasing order,
where multiplicities are counted.
In particular, {\it the largest} and {\it the smallest} singular values are given by
$$s_1(B)=\sup\limits_{y\in S^{m-1}}\|By\|=\|B\|;\;\;\;s_m(B)=\inf\limits_{y\in S^{m-1}}\|By\|.$$
In this paper, we establish convergence of the smallest singular values of a sequence random matrices with i.i.d.\ entries
under minimal moment assumptions.

The extreme singular values of random matrices
attract considerable attention of researchers both in {\it limiting} and {\it non-limiting} settings.
We refer the reader to surveys and monographs \cite{BS}, \cite{PS}, \cite{RV_CONGRESS}, \cite{V}
for extensive information on the spectral theory of random matrices.
Here, we shall focus on the following specific question:
for matrices with i.i.d.\ entries, what are the weakest possible assumptions
on the entries which are sufficient for the smallest singular value to ``concentrate''?

We note that a corresponding problem for the {\it largest} singular value (i.e.\ the operator norm) was
essentially resolved in the i.i.d.\ case, where finiteness of the fourth moment of
the entries turns out to be crucial both in limiting and non-limiting settings. We refer the reader to
\cite{YBK} and \cite{BSY} for results on a.s.\ convergence of the largest singular value, and
\cite{L} for the non-limiting case (see also \cite{S}, \cite{LS} for some negative results
on concentration of the operator norm).

For the {\it smallest}
singular value, its concentration properties are relatively well understood in the i.i.d.\ case
provided that the fourth moment of the matrix entries is bounded.
A classical theorem of Bai and Yin \cite{BY} (see also \cite[Theorem~5.11]{BS}) states the following:
given an array $\{a_{ij}\}$ $(1\le i,j<\infty)$ of i.i.d.\ random variables such that $\Exp a_{ij}=0$, $\Exp {a_{ij}}^2=1$
and $\Exp {a_{ij}}^4<\infty$, and an integer sequence $(N_m)_{m=1}^\infty$ with $m/N_m\longrightarrow z$ for some $z\in(0,1)$,
the $N_m\times m$ matrices $A_{m}=(a_{ij})$ $(1\le i\le N_m,1\le j\le m)$ satisfy
$${N_m}^{-1/2}s_{m}(A_{m})\longrightarrow 1-\sqrt{z}\;\;\mbox{almost surely}.$$
Further, it is proved in \cite{RV_UPPER BOUND}, \cite{RV_SQUARE} that
for square $m\times m$ matrices with i.i.d.\ centered entries with unit variance and
a bounded fourth moment, one has $s_m(A)\approx m^{-1/2}$ with a large probability.

A natural question in connection with the mentioned results is {\it whether the assumption on the
fourth moment is necessary for the least singular value to ``concentrate''};
in particular, whether
any assumptions on moments of $a_{ij}$'s higher than the $2$-nd are required for the a.s.\ convergence
in the Bai--Yin theorem. This question is discussed in \cite{BS} on p.~6.
Solving the problem was a motivation for our work.

A considerable progress has been made recently in the direction of weakening the moment assumptions on matrix entries.
For square matrices, given a sufficiently large $m$ and an $m\times m$ matrix with i.i.d.\ entries with zero mean
and unit variance, its smallest
singular value is bounded from below by a constant (negative) power of $m$ with probability close to one
\cite[Theorem~2.1]{TV} (see also \cite[Theorem~4.1]{GT} for sparse matrices).

For {\it tall} rectangular matrices, Srivastava and Vershynin proved in \cite{SV} that for any $\varepsilon,\eta>0$ and
an $N\times m$ random matrix $A$ with independent isotropic
rows $X_i$ such that $\sup\limits_{y\in S^{m-1}}\Exp|\langle X_i,y\rangle|^{2+\eta}\le C$,
the singular value $s_m(A)$ satisfies $\Exp s_m(A)^2\ge (1-\varepsilon)N$ provided that
the aspect ratio $N/m$ is bounded from below by a certain function of $\varepsilon$ and $\eta$.
This result of \cite{SV} was strengthened by Koltchinskii and Mendelson \cite{KM} who proved that, under similar assumptions
on the matrix, $s_m(A)\ge (1-\varepsilon)\sqrt{N}$ with a very large probability. Moreover,
another theorem of \cite{KM} states that, for a sufficiently tall $N\times m$ random matrix $A$
with i.i.d.\ isotropic rows satisfying certain ``spreading'' condition,
$s_m(A)\gtrsim\sqrt{N}$ with probability very close to one.
Some further strengthening of the results of \cite{KM} is obtained in \cite{Yaskov}.

A situation when no upper bounds for moments of the matrix entries are given, was considered in \cite{T}.
It was proved that for any $\delta>1$, $N\ge\delta m$ and
for an $N\times m$ random matrix $A$ with i.i.d.\ entries satisfying
$\inf\limits_{\lambda\in\R}\P\bigl\{|a_{11}-\lambda|\ge\alpha\bigr\}\ge \beta$ for some $\alpha,\beta>0$,
one has $\P\bigl\{s_m(A)\ge \alpha u\sqrt{N}\bigr\}\ge 1-2\exp(-vN)$,
where $u,v>0$ depend only on $\beta$ and $\delta$.

The result of \cite{T} can be used to show that in the limiting setup
of the Bai--Yin theorem but without the assumptions on moments higher than the $2$-nd,
the sequence $\bigl({N_m}^{-1/2}s_{m}(A_{m})\bigr)_{m=1}^\infty$ satisfies
$$\liminf\limits_{m\to\infty}\bigl({N_m}^{-1/2}s_{m}(A_{m})\bigr)\ge r>0\;\;\mbox{almost surely},$$
where $r$ is a certain function of $z=\lim m/N_m$ and the distribution of $a_{ij}$'s.
The same conclusion can be derived from \cite[Theorem~1.4]{KM}, if
we additionally assume that the limiting aspect ratio $z$ is bounded from above
by a sufficiently small positive quantity (i.e.\ the matrices are tall).
However, both \cite[Theorem~1]{T} and \cite[Theorem~1.4]{KM} do not give
the precise asymptotics.

This problem is resolved in our paper. The main result is the following
\begin{theor}\label{universal theor}
Let $\{a_{ij}\}$ $(1\le i,j<\infty)$ be a set of i.i.d.\ real valued random variables with zero mean and unit variance.
Further, let $(N_m)_{m=1}^\infty$ be an integer sequence satisfying $m/N_m\longrightarrow z$ for some $z\in(0,1)$.
For every $m\in\N$ we denote by $A_{m}$ the random $N_m\times m$ matrix with entries $a_{ij}$
$(1\le i\le N_m,1\le j\le m)$. Then with probability one the sequence
$$\bigl({N_m}^{-1/2}s_{m}(A_{m})\bigr)_{m=1}^\infty$$
converges to $1-\sqrt{z}$.
\end{theor}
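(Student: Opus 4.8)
\medskip\noindent\emph{Plan of proof.}\ The plan is to establish the two one‑sided estimates $\limsup_m N_m^{-1/2}s_m(A_m)\le 1-\sqrt z$ and $\liminf_m N_m^{-1/2}s_m(A_m)\ge 1-\sqrt z$ almost surely. The first is a soft consequence of the Marchenko--Pastur law. Since the $a_{ij}$ are i.i.d.\ with finite variance, the empirical spectral distributions of $N_m^{-1}A_m^TA_m$ converge weakly, almost surely, to the Marchenko--Pastur distribution with ratio $z$, whose support is $[(1-\sqrt z)^2,(1+\sqrt z)^2]$ and which assigns positive mass to every interval $\bigl((1-\sqrt z)^2,(1-\sqrt z+\varepsilon)^2\bigr)$. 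Hence, almost surely, for each $\varepsilon>0$ and all large $m$ the matrix $N_m^{-1}A_m^TA_m$ has an eigenvalue below $(1-\sqrt z+\varepsilon)^2$, i.e.\ $N_m^{-1/2}s_m(A_m)\le 1-\sqrt z+\varepsilon$; letting $\varepsilon\downarrow0$ gives the claim. Note that this argument says nothing about a lower bound — the Marchenko--Pastur law controls only the bulk, not a stray eigenvalue near the left edge — so the entire difficulty lies in the lower estimate, which is exactly where the fourth‑moment hypothesis entered the classical proofs.

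For the lower estimate fix $\varepsilon>0$ and truncate: write $a_{ij}=b_{ij}+c_{ij}$, where $b_{ij}=a_{ij}\mathbf 1_{\{|a_{ij}|\le M\}}-\Exp\bigl[a_{ij}\mathbf 1_{\{|a_{ij}|\le M\}}\bigr]$ for a large constant $M=M(\varepsilon)$, so that $|b_{ij}|\le 2M$, $\Exp b_{ij}=0$, $\Exp b_{ij}^2=\sigma_M^2\uparrow1$, while the remainder $c_{ij}$ is i.i.d., centred, with variance $\tau_M^2=1-\sigma_M^2\downarrow 0$ but possibly heavy tails. Let $B_m$, $C_m$ be the corresponding matrices. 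The matrix $B_m$ has bounded entries, hence a finite fourth moment, so the Bai--Yin theorem applies to $\sigma_M^{-1}B_m$ — and, with the aspect ratio adjusted, to any row‑sub‑block of it obtained by deleting $o(m)$ rows — giving almost surely $N_m^{-1/2}s_m(B_m)\to\sigma_M(1-\sqrt z)$ and $N_m^{-1/2}\|B_m\|\to\sigma_M(1+\sqrt z)$. This is where the sharp constant is produced; since $\sigma_M\to1$, it suffices to prove that passing from $B_m$ to $A_m=B_m+C_m$ costs at most $o_M(1)\cdot\sqrt{N_m}$ in the smallest singular value on an event of probability at least $1-\exp\bigl(-c(M,\varepsilon)N_m\bigr)$; this, with Borel--Cantelli and a diagonal argument over $M\to\infty$ and rational $\varepsilon\to 0$, completes the proof.

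The core of the matter is that one may \emph{not} simply write $s_m(A_m)\ge s_m(B_m)-\|C_m\|$: with no moment above the second, $\|C_m\|$ can be a positive power of $N_m$ (a single entry of $C_m$ can have size $\asymp N_m$), and moreover the triangle inequality ignores the positive correlation between $B_my$ and $C_my$. Instead, first delete from $A_m$ the rows carrying the large entries of $C_m$: the set $I$ of rows containing an entry of $C_m$ of absolute value exceeding $\theta\sqrt{N_m}$ satisfies $\card(I)\le\delta m$ with probability $1-\exp(-cN_m)$, where, for fixed $\theta=\theta(\varepsilon)$, one has $\delta=\delta(M,\theta)\to0$ as $M\to\infty$; since removing rows only \emph{decreases} the smallest singular value, $s_m(A_m)\ge s_m(\widetilde A_m)$ for $\widetilde A_m:=A_m$ with the rows in $I$ deleted, and on $\widetilde A_m$ every entry of the remainder part is at most $\theta\sqrt{N_m}$ in absolute value. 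One then analyses $s_m(\widetilde A_m)$ along the scheme of \cite{T}: decompose $S^{m-1}$ into compressible and incompressible vectors; for a fixed vector $y$ write $\|\widetilde A_my\|^2=\sum_i\langle R_i,y\rangle^2$ as a sum of i.i.d.\ nonnegative random variables of mean $\approx1$, whose \emph{lower} tail is controlled by a truncation‑plus‑Hoeffding bound requiring nothing beyond the second moment, giving $\|\widetilde A_my\|\ge(1-\varepsilon)\sqrt{N_m}$ with probability $1-\exp(-c_\varepsilon N_m)$; handle directions supported on the $O(\delta m)$ columns that carry large entries of $C_m$ (and, if needed, a further small set of badly behaved columns) by applying the theorem of \cite{T} to the corresponding super‑tall sub‑matrix, which yields a non‑sharp but positive lower bound $\gtrsim\sqrt{N_m}$ — more than enough, since those columns are few; and carry out the passage from a net to all directions as in \cite{T}, using the boundedness of the entries of $B_m$ for the smooth part and the sparsity/tallness structure for the rest.

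The step I expect to be the main obstacle is precisely this last one: making the $\varepsilon$‑net/union‑bound argument uniform over all of $S^{m-1}$ while the heavy‑tailed part $C_m$ has uncontrolled operator norm, so that the standard ``net point plus $\|A\|\times(\text{mesh})$'' approximation is simply unavailable. Overcoming this requires the machinery of \cite{T} for compressible vectors and for tall blocks, and the new ingredient is to interleave with it the sharp Bai--Yin asymptotics for the bounded part $B_m$ so that the constant obtained is $1-\sqrt z-o_M(1)$ rather than merely a positive number. Bookkeeping the many parameters ($M$, $\theta$, $\delta$, the net meshes, the exponents in the large‑deviation estimates) and the conditioning introduced by selecting the ``bad'' rows and columns is where the technical length is concentrated.
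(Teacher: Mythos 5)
Your global architecture matches the paper's: the upper estimate $\limsup N_m^{-1/2}s_m(A_m)\le 1-\sqrt z$ via Mar\v{c}enko--Pastur, and the lower estimate via truncation at a fixed level $M$, applying Bai--Yin to the bounded-entry matrix, and controlling the loss as $M\to\infty$. You have also correctly diagnosed the central obstruction: the heavy-tailed remainder has uncontrolled operator norm, so neither $s_m(A_m)\ge s_m(B_m)-\|C_m\|$ nor the usual ``net point plus $\|A\|\times(\mbox{mesh})$'' approximation is available. Where your proposal breaks, however, is precisely at the point you flag as ``the main obstacle,'' and the gap is not just technical bookkeeping.

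Your plan deletes the rows where $C_m$ has an entry larger than $\theta\sqrt{N_m}$, producing a \emph{fixed} row set $I$, and then tries to lower-bound $s_m(\Proj_I A_m)$ directly by a compressible/incompressible decomposition a la \cite{T} with Bai--Yin ``interleaved.'' Two problems. First, even after the row deletion the surviving entries of $C_m$ are only bounded by $\theta\sqrt{N_m}$, not by a constant, so $\Proj_I C_m$ can still have operator norm of order $\tau_M N_m$ (its Frobenius norm is $\asymp\tau_M\sqrt{N_m m}$), and the net argument over $S^{m-1}$ is not rescued by that deletion. Second, and more fundamentally, a single-vector lower tail bound $\P\{\|\Proj_I A_m y\|\ge(1-\varepsilon)\sqrt{N_m}\}\ge 1-\exp(-c_\varepsilon N_m)$ combined with a union bound over a net of $S^{m-1}$ can never by itself reproduce the edge constant $1-\sqrt z$: the net argument costs a factor depending on the mesh, and the mesh cannot be taken small enough without norm control. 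That is exactly why \cite{T} and \cite{KM} give only $s_m\gtrsim\sqrt{N_m}$, a non-sharp constant. Your sentence about ``interleaving the sharp Bai--Yin asymptotics so that the constant obtained is $1-\sqrt z-o_M(1)$'' is the entire content of the theorem, and the proposal leaves it unexplained.

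The paper routes around this by never attempting a sharp lower bound via nets at all. Instead it proves a one-sided \emph{comparison inequality}: with probability $1-\exp(-wN)$,
$$s_n(A)\ \ge\ s_n(\tilde A)-\eta\sqrt N-C\sqrt{N\,\Exp\theta^2},$$
where $\tilde A$ is the truncated-and-recentred matrix (not a row submatrix) and $\theta=\xi-\tilde\xi_M$ is the tail. This requires two new ingredients, neither of which appears in your sketch. (i) Proposition~\ref{weak lsv nonsym}: for the tail matrix $D=A-\tilde A$ the quantity $\sup_{y\in S^{n-1}}\min_{|I|\ge N-\varepsilon N}\|\Proj_I Dy\|$ is $O(\sqrt{N\,\Exp\theta^2})$ with probability $1-\exp(-wN)$. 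Crucially, the minimizing index set $I$ is allowed to depend on $y$; a fixed $I$ (your bad-row deletion) would not work. This is an \emph{upper} bound, which is why a net argument does work here --- the net approximation error is absorbed by the cube term $\frac{2}{\sqrt N}B_\infty^N$ in the decomposition $S^{N-1}\subset\Net_1+\Net_2+\frac{2}{\sqrt N}B_\infty^N$, with no recourse to $\|D\|$. (ii) Proposition~\ref{ssv of submatr}: for the \emph{bounded}-entry matrix $\tilde A$, deleting any $\varepsilon N$ rows decreases $s_n$ by at most $\eta\sqrt N$, with probability $1-\exp(-\varepsilon N)$. This is proved by a pigeonhole over the ${N\choose\lceil N-\varepsilon N\rceil}$ choices of $I$, fixing one bad $I_0$, and exploiting that the bottom singular vector of $\Proj_{I_0}\tilde A$ is independent of the remaining $\varepsilon N$ rows. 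Given (i) and (ii), for the worst direction $y$ one picks $I=I(y)$ from (i) and writes $\|Ay\|\ge\|\Proj_I\tilde A y\|-\|\Proj_I(A-\tilde A)y\|\ge s_n(\Proj_I\tilde A)-C\sqrt{N\,\Exp\theta^2}\ge s_n(\tilde A)-\eta\sqrt N-C\sqrt{N\,\Exp\theta^2}$, and Bai--Yin for $\tilde A$ finishes the proof. I would urge you to isolate statement (ii) explicitly in your own attempt: without something playing its role, the sharp constant does not propagate from the bounded part to $A_m$.
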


Theorem~\ref{universal theor} in a strong form establishes the {\it asymmetry} of the limiting behaviour of the extreme singular values:
whereas the fourth moment is necessary for the operator norm, the second moment is sufficient for the convergence of the smallest singular value.

Let us briefly describe our approach to proving Theorem~\ref{universal theor}.
We shall ``approximate'' the matrices $A_m$ by
matrices with truncated and centered entries. Namely, for $M>0$ and all $m\ge 1$ let
$\tilde A_m$ be the $N_m\times m$ matrix with the entries
$$\tilde a_{ij}=a_{ij}\chi_{\{|a_{ij}|\le M\}}-\Exp(a_{ij}\chi_{\{|a_{ij}|\le M\}}),\;\;1\le i\le N_m,\;\;1\le j\le m,$$
where $\chi_{\mathcal E}$ is the indicator of an event $\mathcal E$.
If the truncation level $M$ is large enough then it turns out that for {\it all sufficiently large $m$}
we have $s_m(\tilde A_m)\approx s_m(A_m)$ with probability close to one.
In fact, we need only one-sided estimate for our proof. To be more precise, we will show that with a large probability the quantity
$$\limsup\limits_{m\to\infty}{N_m}^{-1/2}\bigl(s_m(\tilde A)-s_m(A)\bigr)$$
is bounded from above by a
positive number which depends only on $M$ and can be made arbitrarily small by increasing the truncation level
(in a more technical form, this is stated in Theorem~\ref{trunc ssv theor} of the note).
Then, applying the Bai--Yin theorem \cite{BY} to the truncated matrices $\tilde A_m$,
we get
$$\liminf\limits_{m\to\infty}{N_m}^{-1/2}s_m(A)\gtrsim \liminf\limits_{m\to\infty} {N_m}^{-1/2}s_m(\tilde A)\gtrsim 1-\sqrt{z}\;\;\mbox{almost surely},$$
which implies the result.
Thus, the argument of the paper \cite{BY} remains the crucial element of the proof, although we apply it
only to the truncated variables, for which all positive moments are bounded.
Let us emphasize that, whereas a truncation procedure for matrices also appears as a technical step in \cite{BY},
in our approach the truncation level $M$ is {\it not a function of $m$}.

Note that the equivalence $s_m(A_m)\approx s_m(\tilde A_m)$ would follow immediately
if the difference $A_m-\tilde A_m$ had the operator norm very small compared to $\sqrt{N_m}$
with a large probability. However, the moment assumptions that we impose on $a_{ij}$'s
are too weak to expect a good upper bound for $\|A_m-\tilde A_m\|$.
To overcome this problem, we shall consider a special {\it non-convex} function
of the matrix $A_m-\tilde A_m$ which has much better concentration properties
than the norm and which shall act as a ``replacement'' for the norm in our calculations.
This quantity and its concentration properties are discussed in Section~\ref{weak lsv section}
and are the main novel igredient of the paper.

\section{Preliminaries}

In this section, we introduce notation and present some classical or elementary facts, which we
include for an easier referencing.

We denote by $(\Omega,\Sigma,\P)$ a probability space,
and adopt the usual notations and definitions from the Probability Theory
such as i.i.d.\ random variables, the expectation, etc.
Let $\{e_i\}_{i=1}^N$ be the standard unit vector basis in $\R^N$,
$\|\cdot\|$ and $\langle\cdot,\cdot\rangle$ be the canonical Euclidean norm
and corresponding inner product, and $\|\cdot\|_\infty$ be the maximum ($\ell_\infty$-) norm.
The unit Euclidean ball in $\R^n$ shall be denoted by $B_2^n$ and
the cube $[-1,1]^n$ ~--- by $B_\infty^n$.
For a finite set $I$, $|I|$ is its cardinality.
Universal constants are denoted by $C,c_1$, etc.
A numerical subscript in the name of a constant determines the statement where the constant
is defined. Similarly, a function defined within a statement and intended to be
used further in the paper, has the statement number as a subscript.

Let $T$ be a subset of $\R^n$ and $\|\cdot\|_B$ be a norm on $\R^n$ with the unit ball $B$.
A subset $\Net\subset T$ is called {\it an $\varepsilon$-net in $T$ with respect to $\|\cdot\|_B$}
if for any $y\in T$ there is $y'\in\Net$ satisfying $\|y-y'\|_B\le\varepsilon$.
We shall omit the reference to $\|\cdot\|_B$ when $B=B_2^n$.

\begin{lemma}\label{usual net}
For any $n\in\N$ and $\varepsilon\in(0,1]$ there exists an $\varepsilon$-net in $B_2^n$
of cardinality at most $\bigl(\frac{3}{\varepsilon}\bigr)^n$.
\end{lemma}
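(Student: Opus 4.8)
The statement to prove is the standard volumetric bound on covering numbers of the Euclidean ball, so my plan is the classical greedy/packing argument.

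\medskip

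The plan is to construct a maximal $\varepsilon$-separated subset of $B_2^n$ and show it is both an $\varepsilon$-net and small. First I would take $\Net = \{y_1,\dots,y_K\}\subset B_2^n$ to be a maximal (with respect to inclusion) subset with the property that $\|y_i - y_j\| > \varepsilon$ for all $i\neq j$; such a set exists and is finite because $B_2^n$ is compact (or, more elementarily, because the next step bounds $K$ a priori). By maximality, for every $y\in B_2^n$ that is not already in $\Net$ there must be some $y_i\in\Net$ with $\|y-y_i\|\le\varepsilon$ — otherwise $\Net\cup\{y\}$ would still be $\varepsilon$-separated, contradicting maximality — so $\Net$ is indeed an $\varepsilon$-net in $B_2^n$.

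\medskip

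Next I would bound the cardinality $K=|\Net|$ by a volume comparison. Since the points $y_i$ are pairwise more than $\varepsilon$ apart, the open balls $y_i + \frac{\varepsilon}{2}B_2^n$ are pairwise disjoint. Each of these balls is contained in $\bigl(1+\frac{\varepsilon}{2}\bigr)B_2^n$, because $\|y_i\|\le 1$ and each ball has radius $\varepsilon/2$. Comparing Lebesgue measures and using that $\Vol(rB_2^n) = r^n\Vol(B_2^n)$, disjointness gives
$$K\Bigl(\frac{\varepsilon}{2}\Bigr)^n\Vol(B_2^n)\;\le\;\Bigl(1+\frac{\varepsilon}{2}\Bigr)^n\Vol(B_2^n),$$
hence $K\le\bigl(1+\tfrac{2}{\varepsilon}\bigr)^n$. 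Finally, since $\varepsilon\le 1$ we have $1+\tfrac{2}{\varepsilon}\le\tfrac{1}{\varepsilon}+\tfrac{2}{\varepsilon}=\tfrac{3}{\varepsilon}$, so $K\le\bigl(\tfrac{3}{\varepsilon}\bigr)^n$, which is the claimed bound.

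\medskip

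There is no serious obstacle here: the only mild point of care is justifying the existence of a \emph{maximal} separated set — one can either invoke compactness of $B_2^n$ together with the fact that any $\varepsilon$-separated subset of a bounded set is finite, or simply build the set greedily and observe that the volume estimate above caps its size, so the process terminates. Everything else is the standard packing-implies-covering inequality combined with the scaling of volume under dilation.
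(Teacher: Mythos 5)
Your proof is correct and is exactly the standard volumetric packing argument (maximal $\varepsilon$-separated set $\Rightarrow$ $\varepsilon$-net, then compare volumes of disjoint $\tfrac{\varepsilon}{2}$-balls inside $(1+\tfrac{\varepsilon}{2})B_2^n$, then use $\varepsilon\le 1$ to clean up $\bigl(1+\tfrac{2}{\varepsilon}\bigr)^n\le\bigl(\tfrac{3}{\varepsilon}\bigr)^n$). The paper itself does not spell out a proof but simply cites \cite[Lemma~4.16]{Pisier}, which establishes the same kind of volume-comparison bound; so your argument is in line with the intended justification.
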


\begin{lemma}\label{cubic net}
For any $n\in\N$ and any $T\subset S^{n-1}$ there is an $n^{-1/2}$-net
in $T$ with respect to $\|\cdot\|_\infty$ of cardinality at most $\exp(C_{\ref{cubic net}} n)$.
Here, $C_{\ref{cubic net}}>0$ is a universal constant.
\end{lemma}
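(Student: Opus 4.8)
The plan is to build the net by two successive approximations: first pass from the sphere $S^{n-1}$ to a convenient fixed-scale Euclidean net, and then ``round'' each net point to a lattice in $\frac{1}{\sqrt n}\Z^n$, using that the Euclidean net already controls the $\ell_\infty$-deviation for free once the scale is comparable to $n^{-1/2}$. Concretely, fix a small absolute constant $\varepsilon_0\in(0,1/4]$ and take, via Lemma~\ref{usual net}, an $\varepsilon_0$-net $\Net_0$ in $B_2^n$ of size at most $(3/\varepsilon_0)^n$; intersecting with a neighbourhood of the sphere we may assume $\Net_0\subset 2B_2^n$. For each $y\in T\subset S^{n-1}$ choose $y'\in\Net_0$ with $\|y-y'\|_2\le\varepsilon_0$, so in particular $\|y-y'\|_\infty\le\varepsilon_0$. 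Thus $\Net_0$ is already an $\varepsilon_0$-net in $T$ with respect to $\|\cdot\|_\infty$, of exponential cardinality; the only remaining issue is to replace the fixed scale $\varepsilon_0$ by the scale $n^{-1/2}$, which for large $n$ is \emph{smaller}, so a direct appeal to Lemma~\ref{usual net} at scale $n^{-1/2}$ would cost $(3\sqrt n)^n$ — too large.

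The key step, then, is the refinement. I would discretize $\R^n$ by the lattice $L=\frac{1}{\sqrt n}\,\Z^n$ and, for each $y'\in\Net_0$, pick a nearest point $\pi(y')\in L$ in the $\ell_\infty$-metric, so that $\|y'-\pi(y')\|_\infty\le \frac{1}{2\sqrt n}$. Set $\Net=\{\pi(y'):y'\in\Net_0\}\cap\bigl(\text{a fixed }\ell_\infty\text{-ball of radius }3\bigr)$. Then for $y\in T$ and the corresponding $y'$ we get $\|y-\pi(y')\|_\infty\le \varepsilon_0+\frac{1}{2\sqrt n}\le n^{-1/2}$ once $\varepsilon_0$ and $n$ are chosen so that $\varepsilon_0\le \frac{1}{2}n^{-1/2}$ — but that again forces $\varepsilon_0\to 0$. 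The honest fix is instead to \emph{iterate}: cover $T$ by $\Net_0$ at scale $\varepsilon_0$, then each Euclidean ball $y'+\varepsilon_0 B_2^n$ again by a rescaled copy of $\Net_0$ at scale $\varepsilon_0^2$, and so on for $k\approx \log_{1/\varepsilon_0}\!\sqrt n$ steps until the scale drops below $n^{-1/2}$; the total cardinality is at most $(3/\varepsilon_0)^{nk}$. The point is that, since at each stage we are re-netting a \emph{Euclidean} ball and $\|\cdot\|_\infty\le\|\cdot\|_2$, after $k$ steps we have an $\varepsilon_0^{\,k}$-net of $T$ in both metrics; choosing $k=\lceil \log(\sqrt n)/\log(1/\varepsilon_0)\rceil$ makes $\varepsilon_0^{\,k}\le n^{-1/2}$. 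Its size is at most $(3/\varepsilon_0)^{nk}=\exp\!\bigl(nk\log(3/\varepsilon_0)\bigr)$, and $k\log(3/\varepsilon_0)=O(\log\sqrt n)=O(\log n)$ — which is $Cn\log n$, still not linear in $n$.

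The genuine route to the claimed $\exp(Cn)$ bound is therefore the volumetric one, and I expect this to be the crux of the argument. The statement is essentially the fact that the $\ell_\infty$-covering number of the Euclidean sphere at scale $n^{-1/2}$ is exponential, which holds because the ratio of volumes $\Vol\bigl(B_2^n+ n^{-1/2}B_\infty^n\bigr)/\Vol\bigl(n^{-1/2}B_\infty^n\bigr)$ is controlled: $n^{-1/2}B_\infty^n$ is exactly the ball of $\ell_\infty$-radius $n^{-1/2}$, it is \emph{inscribed} in $B_2^n$ up to a constant (indeed $n^{-1/2}B_\infty^n\subset B_2^n$), and a standard packing argument gives an $n^{-1/2}$-net of $2B_2^n$ in the $\|\cdot\|_\infty$-metric of cardinality at most $\Vol\bigl(2B_2^n+\tfrac12 n^{-1/2}B_\infty^n\bigr)\big/\Vol\bigl(\tfrac12 n^{-1/2}B_\infty^n\bigr)\le \Vol\bigl(3B_2^n\bigr)\big/\Vol\bigl(\tfrac12 n^{-1/2}B_\infty^n\bigr)$. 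Now $\Vol(3B_2^n)=3^n\Vol(B_2^n)=3^n\pi^{n/2}/\Gamma(n/2+1)$ and $\Vol\bigl(\tfrac12 n^{-1/2}B_\infty^n\bigr)=(n^{-1/2})^n=n^{-n/2}$, so the ratio is $3^n\pi^{n/2}n^{n/2}/\Gamma(n/2+1)$, and by Stirling $\Gamma(n/2+1)\approx (n/2)^{n/2}e^{-n/2}\sqrt{\pi n}$, whence the ratio is at most $\exp(Cn)$ for an absolute constant $C$. Intersecting this net with $S^{n-1}$-neighbourhood and translating net points back into $T$ (each retained point is within $n^{-1/2}$ of some point of $T$, or can be discarded otherwise) yields the desired $n^{-1/2}$-net in $T$ with respect to $\|\cdot\|_\infty$ of size $\exp(C_{\ref{cubic net}}n)$. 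The main obstacle is precisely keeping the volumetric estimate clean — one must use that $n^{-1/2}B_\infty^n$, although a ``small'' box, has volume only $n^{-n/2}$, which is comparable (up to $\exp(Cn)$) to $\Vol(B_2^n)$, so dividing by it does not blow the count up super-exponentially.
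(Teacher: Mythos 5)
Your final, volumetric argument is exactly the paper's route: the paper simply cites the standard covering-number estimate for a pair of convex bodies (Pisier, Lemma~4.16) applied to $(B_2^n,B_\infty^n)$ and then pushes the resulting net into $T$, which is what you reprove by hand via the packing/volume-ratio bound and Stirling. One small slip to fix: translating an $n^{-1/2}$-net of $2B_2^n$ back into $T$ by the triangle inequality only produces a $2n^{-1/2}$-net of $T$, so you should either start at scale $(4n)^{-1/2}$ (as the paper does) or, more simply, take a maximal $n^{-1/2}$-separated subset of $T$ itself, which is automatically contained in $T$ and satisfies the same volumetric cardinality bound.
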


\begin{rem}
Both lemmas above follow from a well known estimate for covering numbers
for pairs of convex sets in $\R^n$ (see, for example, {\cite[Lemma~4.16]{Pisier}}).
For Lemma~\ref{cubic net},
the estimate for the pair $(B_2^n,B_\infty^n)$ yields an existence of a
$(4n)^{-1/2}$-net $\bar\Net$ in $B_2^n$ with respect to $\|\cdot\|_\infty$
of cardinality at most $\exp(C_{\ref{cubic net}} n)$ for an absolute constant
$C_{\ref{cubic net}}>0$. Then $\Net\subset T$ can be constructed by picking a point
from every non-empty intersection of the form $(y'+(4n)^{-1/2}B_\infty^n)\cap T$, $y'\in\bar\Net$.
\end{rem}

The next statement, which is sometimes called the Bernstein (or Hoeffding's) inequality, can be derived
from classical Khintchine's inequality for the sum of weighted independent signs
by a symmetrization procedure:
\begin{lemma}[see, for ex., {\cite[Proposition~5.10]{V}}]\label{Khintchine mod}
Let $n\in\N$, $M>0$, $y=(y_1,y_2,\dots,y_n)$ with $\|y\|=1$, and let $a_1,a_2\dots,a_n$ be
independent mean zero random variables with $|a_j|\le M$ a.s.\ ($j=1,2,\dots,n$). Then
$$\P\Bigl\{\Bigl|\sum\limits_{j=1}^n a_j y_j\Bigr|\ge\tau\Bigr\}\le 2\exp(-c_{\ref{Khintchine mod}}\tau^2/M^2),\;\;\tau>0,$$
where $c_{\ref{Khintchine mod}}>0$ is a universal constant.
\end{lemma}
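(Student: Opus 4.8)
\emph{Proof sketch.} The plan is to reduce the statement to the classical Khintchine inequality for Rademacher sums via symmetrization, and then conclude with a Chernoff-type bound.

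First I would introduce an independent copy $a_1',\dots,a_n'$ of the sequence $a_1,\dots,a_n$ and set $S=\sum_{j=1}^n a_j y_j$. Since $\Exp a_j'=0$, for every convex function $f\colon\R\to\R$ one has, conditioning on $(a_j)_j$ and applying Jensen's inequality,
$$\Exp f(S)=\Exp f\Bigl(S-\Exp\bigl[\,\textstyle\sum_{j=1}^n a_j' y_j\,\bigr]\Bigr)\le\Exp f\Bigl(\sum_{j=1}^n (a_j-a_j')y_j\Bigr).$$
The symmetrized variables $a_j-a_j'$ are independent and symmetric, hence $\sum_{j=1}^n(a_j-a_j')y_j$ has the same distribution as $\sum_{j=1}^n\varepsilon_j(a_j-a_j')y_j$, where $\varepsilon_1,\dots,\varepsilon_n$ are i.i.d.\ Rademacher variables independent of everything else.

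Next I would take $f(t)=\cosh(\lambda t)$ with a parameter $\lambda>0$ to be optimized. Conditioning on a realization of $(a_j,a_j')_j$ and writing $c_j=(a_j-a_j')y_j$, the one-line form of Khintchine's inequality (obtained from the elementary bound $\cosh x\le e^{x^2/2}$) gives
$$\Exp_\varepsilon\cosh\Bigl(\lambda\sum_{j=1}^n\varepsilon_j c_j\Bigr)=\prod_{j=1}^n\cosh(\lambda c_j)\le\exp\Bigl(\tfrac{\lambda^2}{2}\sum_{j=1}^n c_j^2\Bigr).$$
Because $|a_j-a_j'|\le 2M$ almost surely and $\|y\|=1$, we have $\sum_{j=1}^n c_j^2\le 4M^2\sum_{j=1}^n y_j^2=4M^2$ deterministically, so combining with the symmetrization inequality yields $\Exp\cosh(\lambda S)\le\exp(2\lambda^2 M^2)$.

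Finally, since $\cosh$ is even and nondecreasing on $[0,\infty)$, we have $\cosh(\lambda S)\ge\tfrac12 e^{\lambda\tau}\chi_{\{|S|\ge\tau\}}$, so Markov's inequality gives $\P\{|S|\ge\tau\}\le 2\exp(2\lambda^2 M^2-\lambda\tau)$; the choice $\lambda=\tau/(4M^2)$ produces the asserted bound with $c_{\ref{Khintchine mod}}=1/8$. There is no real obstacle here — this is a standard classical estimate — but one point deserves attention: a tail bound cannot be read off directly from the symmetrization inequality, since the relevant indicator is not convex, so the argument must be routed through a convex function such as $\cosh$ (or $e^{\lambda t}$) before Markov's inequality is applied.
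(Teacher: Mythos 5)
Your proof is correct, and it follows precisely the route the paper indicates in the sentence preceding the lemma: symmetrize to reduce to a weighted Rademacher sum, bound its moment generating function (the Khintchine-type estimate via $\cosh x\le e^{x^2/2}$), and finish with Markov's inequality. The computations all check out, yielding $c_{\ref{Khintchine mod}}=1/8$.
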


The lemma below is a law of large numbers, where instead of the arithmetic mean of a collection
of random variables we consider more general weighted sums.
As in the case of the classical weak LLN, the statement can be proved by applying
Levy's continuity theorem for characteristic functions.
\begin{lemma}\label{law of ln}
Let $a_1,a_2,\dots$ be i.i.d.\ random variables with zero mean. Then for any $\varepsilon>0$ there
is $\delta>0$ depending only on $\varepsilon$ and the distribution of $a_j$'s with the following property:
whenever $(t_j)_{j=1}^\infty$ is a sequence of non-negative real numbers such that $\sum_{j=1}^\infty t_j=1$
and $\max t_j\le\delta$, we have
$$\P\Bigl\{\Bigl|\sum\limits_{j=1}^\infty a_j t_j\Bigr|>\varepsilon\Bigr\}<\varepsilon.$$
\end{lemma}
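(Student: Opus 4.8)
The plan is to prove this ``weighted weak law of large numbers'' by the characteristic function method, since the hypothesis $\sum t_j = 1$, $\max t_j \le \delta$ says that no single summand dominates, which is exactly the Lindeberg-type condition one needs. Write $S = \sum_{j=1}^\infty a_j t_j$ and let $\varphi(u) = \Exp e^{i u a_1}$ be the common characteristic function of the $a_j$. Since $\Exp a_1 = 0$, we have $\varphi(u) = 1 + o(u)$ as $u \to 0$; more precisely, for every $\eta > 0$ there is $u_0 > 0$ such that $|\varphi(u) - 1| \le \eta |u|$ for all $|u| \le u_0$. The characteristic function of $S$ factors as $\Exp e^{i u S} = \prod_{j=1}^\infty \varphi(u t_j)$, and the idea is that if all the $t_j$ are small then every argument $u t_j$ lies in the regime where $\varphi$ is close to $1$, so the product stays close to $1$ on a fixed interval of $u$, whence $S$ is close to $0$ in distribution, hence in probability.

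The key steps, in order. First, fix $\varepsilon > 0$ and reduce the probability bound to a statement about $\Exp e^{iuS}$ via a quantitative form of Lévy's continuity theorem: there is a standard inequality (e.g.\ via smoothing with a triangular/Fejér kernel) of the shape $\P\{|S| > \varepsilon\} \le \frac{C}{\varepsilon T}\int_{-T}^{T}\bigl(1 - \mathrm{Re}\,\Exp e^{iuS}\bigr)\,du$ for a suitable $T$ depending on $\varepsilon$; so it suffices to make the right-hand side small, which will follow once $|\Exp e^{iuS} - 1|$ is small uniformly for $|u| \le T$. Second, choose $u_0 = u_0(\varepsilon)$ so that $|\varphi(u) - 1| \le \frac{\varepsilon}{2CT}|u|$ for $|u| \le u_0$ (using $\Exp a_1 = 0$), and then set $\delta = u_0 / T$, so that $\max_j t_j \le \delta$ forces $|u t_j| \le u_0$ for every $j$ whenever $|u| \le T$. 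Third, with $w_j := \varphi(u t_j) - 1$ we get $|w_j| \le \frac{\varepsilon}{2CT}|u| t_j$, hence $\sum_j |w_j| \le \frac{\varepsilon}{2CT}|u| \le \frac{\varepsilon}{2CT} T = \frac{\varepsilon}{2C}$; using the elementary bound $\bigl|\prod_j (1 + w_j) - 1\bigr| \le \exp(\sum_j |w_j|) - 1$ for $\sum |w_j|$ small, we obtain $|\Exp e^{iuS} - 1| \le \exp(\varepsilon/(2C)) - 1$, which is at most (a constant multiple of) $\varepsilon$. Fourth, plug this into the continuity-theorem inequality to conclude $\P\{|S| > \varepsilon\} < \varepsilon$ after adjusting constants; since $\delta$ was chosen as a function of $\varepsilon$ and the law of $a_1$ only, we are done.

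A couple of routine points that need care but should go through cleanly: the infinite products and sums converge absolutely because $\sum_j t_j = 1 < \infty$ and $|w_j| \lesssim t_j$, so dominated convergence justifies $\Exp e^{iuS} = \prod_j \varphi(ut_j)$ (approximating $S$ by its finite partial sums, each of which is a genuine random variable, and noting $\sum_{j > n} a_j t_j \to 0$ in probability by the same estimate applied to the tail weights, renormalized); and one must make sure the smoothing inequality is stated for a fixed finite $T = T(\varepsilon)$ rather than $T \to \infty$, which is fine since we only need $S$ near $0$, not a full distributional limit. The main obstacle is really just bookkeeping the order of quantifiers — $\delta$ must depend on $\varepsilon$ and the distribution but \emph{not} on the sequence $(t_j)$ — and this is handled automatically by the above because $u_0$ and $T$ depend only on $\varepsilon$ and $\varphi$, and $\delta = u_0/T$. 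There is no genuinely hard analytic step; the content is the observation that $\Exp a_1 = 0$ upgrades the trivial bound $|\varphi(u)-1| = o(1)$ to the linear bound $|\varphi(u)-1| = o(|u|)$, which is precisely what is needed to absorb the $\sum_j t_j = 1$ normalization.
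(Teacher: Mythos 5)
Your proposal is correct and follows essentially the same route the paper has in mind: the paper gives no detailed argument for this lemma, merely remarking that it "can be proved by applying Lévy's continuity theorem for characteristic functions," and your quantitative implementation via a smoothing/truncation inequality together with the linear bound $|\varphi(u)-1|=o(|u|)$ (which uses only $\Exp a_1=0$, hence $\Exp|a_1|<\infty$) is a complete execution of that plan. The routine points you flag — the exact constants in the smoothing inequality and the a.s.\ convergence of the infinite series $\sum_j a_j t_j$ — are indeed harmless and do not affect the logic.
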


Given an $m\times m$ random symmetric matrix $T$ with eigenvalues $\lambda_1,\lambda_2,\dots,\lambda_m$,
{\it the empirical spectral distribution} of $T$ is the function on $\R$ given by
$$F^T(t)=\frac{1}{m}\bigl|\bigl\{j\le m:\,\lambda_j\le t\bigr\}\bigr|,\;\;t\in\R.$$

\begin{theor}[{Mar\v cenko--Pastur law; see \cite{MP}, \cite{Y}, \cite[Theorem~3.6]{BS}}]\label{Mar Pas}
Let $\{a_{ij}\}$ $(1\le i,j\le\infty)$ be a set of i.i.d.\ random variables with zero mean and unit variance and let
$(N_m)_{m=1}^\infty$ be an integer sequence satisfying $m/N_m\longrightarrow z$ for some $z\in(0,1)$.
For every $m\in\N$ denote by $A_{m}$ the random $N_m\times m$ matrix with entries $a_{ij}$
$(1\le i\le N_m,1\le j\le m)$ and by $T_m$ the matrix $\frac{1}{N_m}A_{m}^T A_{m}$.
Then with probability one the sequence of empirical spectral distributions $\{F^{T_m}\}$
converges pointwise to a non-random distribution
given by
$$F_{MP}(t)=
\begin{cases}
0,&\mbox{if }t\le r,\\
\frac{1}{2\pi z}\int\limits_{r}^t \frac{\sqrt{(R-\tau)(\tau-r)}}{\tau}\,d\tau,&\mbox{if }r\le t\le R,\\
1,&\mbox{if }t\ge R.
\end{cases}$$
where $r=(1-\sqrt{z})^2$ and $R=(1+\sqrt{z})^2$.
\end{theor}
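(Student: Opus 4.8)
The statement is the classical Marčenko--Pastur law, so the plan is to reproduce (a version of) its standard proof via the Stieltjes transform, following the scheme of \cite{BS}. For $\zeta$ in the open upper half-plane $\mathbb{C}^+$ set $s_m(\zeta)=\frac1m\,\mathrm{tr}\,(T_m-\zeta I)^{-1}=\int(t-\zeta)^{-1}\,dF^{T_m}(t)$ and $s_{MP}(\zeta)=\int(t-\zeta)^{-1}\,dF_{MP}(t)$. Since pointwise convergence on $\mathbb{C}^+$ of Stieltjes transforms of (sub)probability measures is equivalent to weak convergence of those measures, and since $F_{MP}$ is continuous on $\R$ (so that weak convergence of $F^{T_m}$ to $F_{MP}$ upgrades automatically to pointwise convergence of the distribution functions), it suffices to prove that for each \emph{fixed} $\zeta\in\mathbb{C}^+$ one has $s_m(\zeta)\to s_{MP}(\zeta)$ almost surely.

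The first reduction is a truncation: replacing each $a_{ij}$ by $a_{ij}\chi_{\{|a_{ij}|\le\varepsilon_m\sqrt{N_m}\}}$, recentred and rescaled to unit variance, with $\varepsilon_m\to0$ slowly, changes $F^{T_m}$ by an amount that tends to $0$, by the standard rank/interlacing and Hoffman--Wielandt-type inequalities for empirical spectral distributions together with the second-moment hypothesis; hence one may assume $|a_{ij}|\le\varepsilon_m\sqrt{N_m}$, so that all moments are under control. Next comes the a.s.\ concentration. Exposing the columns of $A_m$ one at a time and writing $\mathcal{F}_k$ for the $\sigma$-algebra they generate, $s_m(\zeta)-\Exp s_m(\zeta)$ is a sum of the martingale differences $\Exp[s_m(\zeta)\mid\mathcal{F}_k]-\Exp[s_m(\zeta)\mid\mathcal{F}_{k-1}]$, each of modulus $O\bigl(1/(m\,|\Im\zeta|)\bigr)$ by the rank-one resolvent bound $\bigl|\mathrm{tr}\,(B-\zeta I)^{-1}-\mathrm{tr}\,(B+vv^T-\zeta I)^{-1}\bigr|\le 1/|\Im\zeta|$; the Azuma--Hoeffding inequality then gives $\P\{|s_m(\zeta)-\Exp s_m(\zeta)|>\tau\}\le2\exp(-c\,m\,\tau^2|\Im\zeta|^2)$, and Borel--Cantelli yields $s_m(\zeta)-\Exp s_m(\zeta)\to0$ a.s. It remains to compute $\lim_m\Exp s_m(\zeta)$.

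For this I would use the Schur complement. With $R=N_m^{-1/2}A_m$, with $r_j$ its $j$-th column, and with $\widehat T_m^{(j)}=R_{(j)}R_{(j)}^T$ the $N_m\times N_m$ companion Gram matrix obtained after deleting the $j$-th column, the identity relating the resolvents of $R^TR$ and $RR^T$ gives
$$(T_m-\zeta I)^{-1}_{jj}=\frac{1}{-\zeta-\zeta\, r_j^T(\widehat T_m^{(j)}-\zeta I)^{-1}r_j}.$$
Since $\sqrt{N_m}\,r_j$ has i.i.d.\ (nearly) unit-variance, mean-zero coordinates independent of $\widehat T_m^{(j)}$, the quadratic form concentrates around $N_m^{-1}\mathrm{tr}\,(\widehat T_m^{(j)}-\zeta I)^{-1}$, which differs negligibly from the companion transform $\underline s_m(\zeta)=-\frac{1-m/N_m}{\zeta}+\frac{m}{N_m}s_m(\zeta)$ (the matrices $T_m$ and $\widehat T_m:=RR^T$ share their nonzero eigenvalues, $\widehat T_m$ having $N_m-m$ extra zeros). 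Averaging over $j$ and letting $m/N_m\to z$ shows that any subsequential limit $s(\zeta)$ of $\Exp s_m(\zeta)$ obeys the Marčenko--Pastur equation
$$z\zeta\, s(\zeta)^2+\bigl(\zeta-(1-z)\bigr)s(\zeta)+1=0;$$
the unique root sending $\mathbb{C}^+$ into $\mathbb{C}^+$ is $s_{MP}(\zeta)$, whose inversion formula produces exactly the density in the statement on $[r,R]$. This pins down the limit and finishes the argument. (Alternatively one could run the method of moments, computing $\Exp\frac1m\,\mathrm{tr}\,T_m^k$, identifying the limit with the $k$-th moment of $F_{MP}$ through a count of two-coloured non-crossing structures, and upgrading to a.s.\ convergence via a variance bound on $\mathrm{tr}\,T_m^k$.)

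The delicate point is the last step: making the self-consistent equation rigorous, i.e.\ showing that the random quadratic forms $r_j^T(\widehat T_m^{(j)}-\zeta I)^{-1}r_j$ concentrate around their conditional means \emph{uniformly enough in $j$}, that $N_m^{-1}\mathrm{tr}\,(\widehat T_m^{(j)}-\zeta I)^{-1}$ may be replaced by $N_m^{-1}\mathrm{tr}\,(\widehat T_m-\zeta I)^{-1}$ up to a vanishing error, and that the contribution of the truncated tails of the merely square-integrable entries is negligible. These require the trace/variance estimates for resolvents and some bookkeeping; the remainder of the proof is soft.
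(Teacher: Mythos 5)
The paper does not prove this theorem---it cites it as classical (\cite{MP}, \cite{Y}, \cite[Theorem~3.6]{BS}), and in fact only uses the much weaker one-sided consequence stated in Remark~\ref{Mar Pas rem}, namely that $\limsup_m N_m^{-1/2}s_m(A_m)\le 1-\sqrt z$ almost surely, which already follows from weak convergence of the ESD evaluated at a single $t>(1-\sqrt z)^2$. Your Stieltjes-transform sketch (truncation to $|a_{ij}|\le\varepsilon_m\sqrt{N_m}$, Azuma concentration via the column-exposure martingale and the rank-one resolvent bound, and the Schur-complement self-consistent equation $z\zeta s^2+(\zeta-(1-z))s+1=0$) is a correct outline of the standard Bai--Silverstein argument, so there is nothing to reconcile with the paper: you have simply supplied a proof for a statement the paper takes as given.
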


\begin{rem}\label{Mar Pas rem}
Note that the above theorem does not require any assumptions on moments higher than the $2$nd,
and so can be applied in our setting.
For our proof, we will actually need a much weaker result
than Theorem~\ref{Mar Pas}, namely, that $\limsup\limits_{m\to\infty}\frac{s_m(A_m)}{\sqrt{N_m}}\le 1-\sqrt{z}$
almost surely.
The latter can be immediately verified with help of Theorem~\ref{Mar Pas}: for every fixed $t> (1-\sqrt{z})^2$,
we have $\lim\limits_{m\to\infty}F^{T_m}(t)=F_{MP}(t)>0$ with probability one, hence
the smallest non-zero eigenvalues $\lambda_{\min}(T_m)$ of matrices $T_m$ satisfy
$\limsup\limits_{m\to\infty}\lambda_{\min}(T_m)\le t$ a.s. This implies
$\limsup\limits_{m\to\infty}\frac{s_m(A_m)}{\sqrt{N_m}}\le \sqrt{t}$ a.s., which gives
the required estimate by letting $t\to (1-\sqrt{z})^2$.
\end{rem}

\section{Norms of coordinate projections of random vectors}\label{weak lsv section}

For any $N\in\N$ and a subset $I\subset\{1,2,\dots,N\}$, let us denote by $\Proj_I:\R^N\to\R^N$
the coordinate projection onto the subspace spanned by $\{e_i\}_{i\in I}$.
Throughout the rest of the paper, we will often use expressions of the form $\min\limits_{|I|\ge r}\|\Proj_I x\|$,
where $x$ is some vector in $\R^N$ and $r$ is a positive real number. This notation should be interpreted as
the minimum of $\|\Proj_I x\|$ over {\it all} subsets $I\subset\{1,2,\dots,N\}$ of cardinality at least $r$.

The goal of this section is to show that, given a sufficiently large random $N\times n$ matrix $A$ with i.i.d.\
entries with zero mean and unit variance, the quantity
\begin{equation}\label{the quantity}
\sup\limits_{y\in S^{n-1}}\min\limits_{|I|\ge N-\varepsilon N}\|\Proj_{I}Ay\|
\end{equation}
is of order $\sqrt{N}$ with a very large probability (the probability shall depend on $\varepsilon>0$).
It shall act as a ``replacement'' of the matrix norm $\|A\|$ which 
in our setting may be greater than $\sqrt{N}$ by the order of magnitude with probability close to one.
We remark here that a quantity
$$\max\limits_{|I|=m}\|\Proj_I D\|=\sup\limits_{y\in S^{n-1}}\max\limits_{|I|=m}\|\Proj_I Dy\|,$$
where $m\le N$ and $D$ is an $N\times n$ random matrix with i.i.d.\ isotropic log-concave rows,
played a crucial role in the paper~\cite{ALPT} by Adamczak, Litvak, Pajor and Tomczak-Jaegermann,
dealing with the problem of approximating covariance matrix of a log-concave random vector
by the sample covariance matrix. In our case, however, the latter
quantity is inapplicable as it may not concentrate near $\sqrt{N}$ (even for small $m$).

First, we prove the required estimate for \eqref{the quantity} under the additional assumption
that the entries of $A$ are symmetrically distributed (Lemma~\ref{weak lsv sym}).
Then we generalize the result to non-symmetric distributions in Proposition~\ref{weak lsv nonsym}.
Lemmas~\ref{single vector est}--\ref{cube bound} given below build the framework of the proof.

\begin{lemma}\label{single vector est}
For each $\varepsilon\in(0,1]$ there is $N_{\ref{single vector est}}=N_{\ref{single vector est}}(\varepsilon)>0$
depending only on $\varepsilon$
with the following property:
let $N\ge N_{\ref{single vector est}}$
and let $X=(X_1,X_2,\dots,X_N)$ be a random vector of independent variables, each
$X_i$ having zero mean and unit variance.
Then
$$\min\limits_{|I|\ge N-\varepsilon N}\|\Proj_{I}X\|\le C_{\ref{single vector est}}\sqrt{N}$$
with probability at least $1-\exp(-c_{\ref{single vector est}}\varepsilon N)$, where $C_{\ref{single vector est}},c_{\ref{single vector est}}>0$
are universal constants.
\end{lemma}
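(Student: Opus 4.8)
The plan is to control $\min_{|I|\ge N-\varepsilon N}\|\Proj_I X\|$ by removing the coordinates where $|X_i|$ is largest and showing that the surviving sum of squares is $O(N)$. Concretely, let $I^*\subset\{1,\dots,N\}$ be the complement of the index set of the $\lceil \varepsilon N\rceil$ largest values of $X_i^2$; then $|I^*|\ge N-\varepsilon N$ and
$$\min\limits_{|I|\ge N-\varepsilon N}\|\Proj_I X\|^2\le \|\Proj_{I^*}X\|^2=\sum_{i=1}^N X_i^2-\sum_{i:\,i\notin I^*}X_i^2.$$
So it suffices to prove that with probability at least $1-\exp(-c\varepsilon N)$ we have both $\sum_i X_i^2\le K N$ for a suitable absolute constant $K$ \emph{and} the sum of the largest $\lceil\varepsilon N\rceil$ squares is at least $(K-C_{\ref{single vector est}}^2)N$; but this second route is awkward because we have no control from below on the large order statistics. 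The cleaner approach is to truncate directly and bound the truncated part in expectation.

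Here is the step I would actually carry out. Fix a large absolute constant $M$ (to be chosen). Split each $X_i$ according to the threshold $M$: write $X_i^2=X_i^2\chi_{\{|X_i|\le M\}}+X_i^2\chi_{\{|X_i|>M\}}$. For the bounded part, $\sum_i X_i^2\chi_{\{|X_i|\le M\}}$ is a sum of independent variables lying in $[0,M^2]$ with $\Exp X_i^2\chi_{\{|X_i|\le M\}}\le 1$, so by Bernstein's inequality (Lemma~\ref{Khintchine mod} applied to the centered bounded summands, or a direct Chernoff bound) this sum is at most $2N$ with probability at least $1-\exp(-cN/M^2)$; since $M$ is an absolute constant this is $1-\exp(-c'N)\le 1-\exp(-c'\varepsilon N)$. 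For the unbounded part, let $J=\{i:\,|X_i|>M\}$. By Chebyshev applied coordinatewise, $\P\{|X_i|>M\}\le 1/M^2$, so $|J|$ is stochastically dominated by a Binomial$(N,1/M^2)$ variable; choosing $M=M(\varepsilon)$ large enough that $1/M^2\le\varepsilon/4$, a standard binomial tail bound gives $|J|\le\varepsilon N/2\le \varepsilon N$ with probability at least $1-\exp(-c''\varepsilon N)$ once $N\ge N_{\ref{single vector est}}(\varepsilon)$. On the intersection of these two events, taking $I=\{1,\dots,N\}\setminus J$ (which has $|I|\ge N-\varepsilon N$) we get $\|\Proj_I X\|^2=\sum_{i\notin J}X_i^2=\sum_{i\notin J}X_i^2\chi_{\{|X_i|\le M\}}\le\sum_i X_i^2\chi_{\{|X_i|\le M\}}\le 2N$, which proves the lemma with $C_{\ref{single vector est}}=\sqrt2$.

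The main subtlety I expect is the dependence of $M$ (and hence $N_{\ref{single vector est}}$) on $\varepsilon$: since $M$ depends on $\varepsilon$, the Bernstein bound for the bounded part produces a constant in the exponent of the form $c/M(\varepsilon)^2$, which \emph{degrades with $\varepsilon$}, whereas the statement demands the clean form $\exp(-c_{\ref{single vector est}}\varepsilon N)$ with a universal $c_{\ref{single vector est}}$. To fix this one should not truncate at a level depending on $\varepsilon$ for the \emph{sum-of-squares} estimate; instead truncate at a universal level $M_0$ for the Bernstein step (giving $\sum_i X_i^2\chi_{\{|X_i|\le M_0\}}\le 2N$ with probability $1-\exp(-cN)$, $c$ universal), and separately, for the count of exceptional coordinates, note $\{i:\,|X_i|>M_0\}$ has size $\le \varepsilon N$ with probability $1-\exp(-c\varepsilon N)$ \emph{only if} $\P\{|X_i|>M_0\}\le\varepsilon/4$ — which fails for small $\varepsilon$. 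The genuine resolution is to discard the $\lceil\varepsilon N\rceil$ coordinates with largest $|X_i|$ regardless of any threshold, and observe that on the event $\{\#\{i:|X_i|>M_0\}\le \varepsilon N\}$ (probability $1-\exp(-cN)$ since $\P\{|X_i|>M_0\}$ is an absolute constant $<1$, not required small) this discarded set \emph{contains} $\{i:|X_i|>M_0\}$, so the remaining coordinates all satisfy $|X_i|\le M_0$ and their squares sum to at most $\sum_i X_i^2\chi_{\{|X_i|\le M_0\}}\le 2N$. This keeps every exponent universal at the cost of a mild $\varepsilon$-dependence only in $N_{\ref{single vector est}}$, exactly as the statement allows.
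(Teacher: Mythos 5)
Your first plan is correct and is, in substance, the paper's own argument: truncate $X_i$ at a level $M=M(\varepsilon)$ chosen so that $\P\{|X_i|>M\}\le\varepsilon/4$, show the number of exceedances is $\le\varepsilon N$ with probability $1-\exp(-c\varepsilon N)$, bound the truncated sum of squares via Bernstein/Laplace, and take $I$ to be the complement of the exceedance set. The paper differs only in bookkeeping: it sets $M=4/\varepsilon$, truncates at level $\sqrt{M}$, bounds the exceedance count by Markov plus a binomial coefficient rather than a Chernoff bound, and bounds $\|\tilde X\|^2$ by a Laplace transform estimate rather than citing Bernstein. Either route works.

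The ``subtlety'' you then raise is not a real problem, and your proposed fix introduces a genuine error. You worry that because $M$ depends on $\varepsilon$, the Bernstein exponent $cN/M^2$ ``degrades with $\varepsilon$.'' But with $M^2\asymp 1/\varepsilon$ that exponent is $\asymp \varepsilon N$, which is \emph{exactly} the form $c_{\ref{single vector est}}\varepsilon N$ the statement asks for---the lemma does not demand probability $1-\exp(-cN)$, only $1-\exp(-c\varepsilon N)$, and both of your exceedance and sum-of-squares bounds already deliver that. In trying to fix this non-issue you replace $M(\varepsilon)$ by a universal $M_0$ and assert that the event $\{\#\{i:|X_i|>M_0\}\le\varepsilon N\}$ has probability $1-\exp(-cN)$ ``since $\P\{|X_i|>M_0\}$ is an absolute constant $<1$, not required small.'' This is false. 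Writing $p=\P\{|X_i|>M_0\}$, the count of exceedances is a sum of independent Bernoulli$(p_i)$ with $p_i$ possibly as large as $p$; if $\varepsilon<p$ (which happens for small enough $\varepsilon$ once $M_0$ is fixed, since $p$ need only satisfy $p\le 1/M_0^2$ and can in general be of that order), the law of large numbers forces $\P\{\#\{i:|X_i|>M_0\}\le\varepsilon N\}\to 0$, not $1$. There is no way to make the threshold independent of $\varepsilon$; the tail probability at the truncation level \emph{must} be $\lesssim\varepsilon$, which is precisely why the paper (and your original plan) takes the truncation level to grow like $\varepsilon^{-1/2}$. Discard the last paragraph and keep your first argument.
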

\begin{proof}
Fix any $\varepsilon\in(0,1]$ and define $N_{\ref{single vector est}}$ as the smallest positive integer such that
$$\Bigl(\frac{e}{4}\Bigr)^{\varepsilon N}+\exp(-\varepsilon e N/4)\le\exp(-\varepsilon N/3)$$
for all $N\ge N_{\ref{single vector est}}$. Choose any $N\ge N_{\ref{single vector est}}$ and let 
$X$ be as stated above. Set $M=\frac{4}{\varepsilon}$.
In view of Markov's inequality,
$$\P\bigl\{|\{i\le N:\,|X_i|\ge \sqrt{M}\}|\ge 4N/M\bigr\}\le {N\choose \lceil 4N/M\rceil}\Bigl(\frac{1}{M}\Bigr)^{\lceil 4N/M\rceil}
\le \Bigl(\frac{e}{4}\Bigr)^{\lceil 4N/M\rceil}.$$
Let $\tilde X=(\tilde X_1,\tilde X_2,\dots,\tilde X_N)$ be a vector of truncations of $X_i$'s, with
$$\tilde X_i(\omega)=\begin{cases}X_i(\omega),&\mbox{if }|X_i(\omega)|\le \sqrt{M};\\0,&\mbox{otherwise}.\end{cases}$$
Then, from the above estimate,
$$\P\bigl\{\min\limits_{|I|\ge N-\varepsilon N}\|\Proj_{I}X\|>\|\tilde X\|\bigr\}
\le \P\bigl\{|\{i\le N:\,|X_i|\ge \sqrt{M}\}|\ge \varepsilon N\bigr\}\le \Bigl(\frac{e}{4}\Bigr)^{\varepsilon N}.$$
Now, let us estimate the Euclidean norm of $\tilde X$ using the Laplace transform. Set $\lambda=\frac{1}{M}$. We have
\begin{align*}
\Exp\exp(\lambda\|\tilde X\|^2)&=\prod\limits_{i=1}^N\Exp\exp(\lambda \tilde X_i^2)\\
&=\prod\limits_{i=1}^N\Bigl(1+\int_1^{\exp(\lambda M)}\P\bigl\{\exp(\lambda \tilde X_i^2)\ge\tau\bigr\}\,d\tau\Bigr)\\
&\le\prod\limits_{i=1}^N\Bigl(1+\int_1^{e}\P\Bigl\{\tilde X_i^2\ge\frac{\tau-1}{e\lambda}\Bigr\}\,d\tau\Bigr)\\
&\le\prod\limits_{i=1}^N\Bigl(1+e\lambda\Exp \tilde X_i^2\Bigr)\\
&\le\bigl(1+e\lambda\bigr)^N\\
&\le\exp(eN/M).
\end{align*}
Hence,
$$\P\bigl\{\|\tilde X\|\ge \sqrt{2eN}\bigr\}\le\exp(-eN/M).$$
Finally, using the definition of $N_{\ref{single vector est}}$, we get
\begin{align*}
\P\bigl\{\min\limits_{|I|\ge N-\varepsilon N}\|\Proj_{I}X\|>\sqrt{2eN}\bigr\}
&\le\P\bigl\{\min\limits_{|I|\ge N-\varepsilon N}\|\Proj_{I}X\|>\|\tilde X\|\bigr\}+\P\bigl\{\|\tilde X\|\ge \sqrt{2eN}\bigr\}\\
&\le\Bigl(\frac{e}{4}\Bigr)^{\varepsilon N}+\exp(-\varepsilon e N/4)\\
&\le\exp(-\varepsilon N/3).
\end{align*}
\end{proof}

\begin{lemma}\label{vector bound sym}
For every $K>0$ there is $L_{\ref{vector bound sym}}=L_{\ref{vector bound sym}}(K)>0$ depending only on $K$ with the following property:
Let $N,n\in\N$, $N\ge n$, and let $A=(a_{ij})$ be an $N\times n$ random matrix with i.i.d.\ symmetrically distributed
entries with unit variance. For each $y=(y_1,y_2,\dots,y_n)\in S^{n-1}$ let $I_y:\Omega\to 2^{\{1,2,\dots,N\}}$
be a random subset of $\{1,2,\dots,N\}$ defined as
$$I_y=\Bigl\{i\le N:\,\sum\limits_{j=1}^n a_{ij}^2 y_j^2\le 2\Bigr\}.$$
Then for every $y\in S^{n-1}$ we have
$$\P\bigl\{\|\Proj_{I_y}Ay\|\ge  L_{\ref{vector bound sym}}\sqrt{N}\bigr\}\le\exp(-KN).$$
\end{lemma}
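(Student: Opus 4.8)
The plan is to exploit the symmetry of the entries through a randomization-and-conditioning argument. The obstruction to a direct approach is that, since only a second moment is assumed, the coordinate $(Ay)_i=\sum_j a_{ij}y_j$ need not possess any finite exponential moment, so a plain Chernoff bound applied to $\|\Proj_{I_y}Ay\|^2=\sum_{i\in I_y}(Ay)_i^2$ is unavailable. The set $I_y$ is tailored to fix exactly this: on the event $i\in I_y$ one has $\sum_j a_{ij}^2y_j^2\le 2$, and this quantity is precisely the sub-Gaussian variance proxy of a \emph{randomly signed} version of $(Ay)_i$.

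First I would record the distributional identity: since the $a_{ij}$ are symmetric, the matrix $A$ has the same law as $(\varepsilon_{ij}a_{ij})_{i,j}$, where the $\varepsilon_{ij}\in\{-1,1\}$ are i.i.d.\ Rademacher variables independent of $A$. Because $\varepsilon_{ij}^2a_{ij}^2y_j^2=a_{ij}^2y_j^2$, the random set $I_y=\{i\le N:\sum_j a_{ij}^2y_j^2\le 2\}$ is the same for $A$ and for $(\varepsilon_{ij}a_{ij})_{i,j}$, so it suffices to bound $\P\bigl\{\sum_{i\in I_y}Z_i^2\ge L^2N\bigr\}$, where $Z_i:=\sum_{j=1}^n\varepsilon_{ij}a_{ij}y_j$. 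Next I would condition on $A$: then $I_y$ and the numbers $a_{ij}y_j$ become fixed, the $Z_i$ for distinct $i$ are independent, and for every $i\in I_y$ the elementary bound $\cosh t\le e^{t^2/2}$ gives
\[\Exp\bigl[e^{\lambda Z_i}\mid A\bigr]=\prod_{j=1}^n\cosh(\lambda a_{ij}y_j)\le \exp\Bigl(\tfrac{\lambda^2}{2}\sum_{j=1}^n a_{ij}^2y_j^2\Bigr)\le e^{\lambda^2},\qquad \lambda\in\R.\]
(Alternatively one could invoke Lemma~\ref{Khintchine mod} applied to the signs $\varepsilon_{ij}$ conditionally on $A$, at the cost of an unspecified constant.)

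The third step is to convert this conditional sub-Gaussian estimate into a conditional sub-exponential estimate for $Z_i^2$: using $e^{\mu z^2}=\Exp_g\,e^{zg\sqrt{2\mu}}$ for $\mu\ge0$, with $g$ a standard Gaussian, together with Fubini's theorem, one gets $\Exp[e^{\mu Z_i^2}\mid A]\le\Exp_g\,e^{2\mu g^2}=(1-4\mu)^{-1/2}$ for every $i\in I_y$ and $\mu\in(0,\tfrac14)$; taking $\mu=\tfrac18$ gives $\Exp[e^{Z_i^2/8}\mid A]\le\sqrt2$. Then a Chernoff bound, conditional independence of the $Z_i$, and $|I_y|\le N$ yield, for all $t>0$,
\[\P\Bigl\{\sum_{i\in I_y}Z_i^2\ge t\ \Big|\ A\Bigr\}\le e^{-t/8}\prod_{i\in I_y}\Exp\bigl[e^{Z_i^2/8}\mid A\bigr]\le e^{-t/8}\,2^{N/2}.\]
Choosing $t=L^2N$ makes the right-hand side $\exp\bigl(-(L^2/8-\tfrac12\ln2)N\bigr)$, so setting $L_{\ref{vector bound sym}}(K):=\sqrt{8K+4\ln2}$ makes this at most $e^{-KN}$; since the bound is uniform in $A$, taking expectations removes the conditioning and finishes the proof.

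I expect the only genuinely delicate point — the rest being Khintchine/Hoeffding together with the standard ``square of a sub-Gaussian is sub-exponential'' estimate — to be the conditioning move: one must notice that $I_y$ does not control $(Ay)_i$ pathwise but does control the conditional variance proxy of its randomized version, and that the invariance of $I_y$ under the Rademacher substitution is what allows one to freeze both $I_y$ and the weights $a_{ij}y_j$ simultaneously when conditioning on $A$.
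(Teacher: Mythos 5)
Your proof is correct and takes essentially the same route as the paper: Rademacher symmetrization, the observation that $I_y$ is invariant under sign flips (so it can be frozen when conditioning on $A$), a Hoeffding-type sub-Gaussian bound on each coordinate of the randomized image, and a Chernoff/Laplace-transform bound on the sum of squares. The paper phrases the conditioning via the class $\M_y$ of deterministic matrices and cites Lemma~\ref{Khintchine mod} with an unspecified constant, while you carry out the $\cosh$ and Gaussian-randomization computations explicitly to obtain $L(K)=\sqrt{8K+4\ln 2}$; this is only a presentational difference.
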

\begin{proof}
Fix any $K>0$ and let $N,n$ and $A=(a_{ij})$ be as stated above.
Let $r_{ij}$ $(1\le i\le N,\;1\le j\le n)$ be Rademacher variables jointly independent with $A$, and
let $\bar A$ denote the random $N\times n$ matrix $(r_{ij}a_{ij})$. Then, since $a_{ij}$'s are symmetrically distributed,
for any fixed vector $y=(y_1,y_2,\dots,y_n)\in S^{n-1}$ the distribution of $\|\Proj_{I_y}Ay\|$
is the same as that of $\|\Proj_{I_y}\bar Ay\|$. Define a subset of (non-random) $N\times n$ matrices:
$$\M_y=\Bigl\{B=(b_{ij})\in\R^{N\times n}:\,\sum\limits_{j=1}^n b_{ij}^2y_j^2\le 2\mbox{ for all }i=1,2,\dots,N\Bigr\}$$
and for every $B=(b_{ij})\in\M_y$ denote by $\bar B$ the random matrix $(r_{ij}b_{ij})$.
Note that at every point $\omega$ of the probability space the matrix $\Proj_{I_y(\omega)}\bar A(\omega)$ belongs to $\M_y$.
Then, conditioning on $a_{ij}$'s, we get for every $\tau>0$:
\begin{equation}\label{vector bound sym 125}
\P\bigl\{\|\Proj_{I_y}Ay\|\ge\tau\bigr\}
=\P\bigl\{\|\Proj_{I_y}\bar Ay\|\ge\tau\bigr\}\le\sup\limits_{B\in\M_y}\P\bigl\{\|\bar B y\|\ge\tau\bigr\}.
\end{equation}
Note that for each $B\in\M_y$ and $i\le N$, the $i$-th coordinate of the vector $\bar B y$ satisfies in view of Lemma~\ref{Khintchine mod}:
$$\P\bigl\{|\langle \bar B y,e_i\rangle|\ge\tau\bigr\}\le 2\exp\bigl(-c_{\ref{Khintchine mod}}\tau^2/2\bigr),\;\;\tau>0.$$
A standard application of the Laplace transform then yields
$$\P\bigl\{\|\bar B y\|\ge L_{\ref{vector bound sym}}\sqrt{N}\bigr\}\le \exp(-KN)$$
for some $L_{\ref{vector bound sym}}>0$ depending only on $K$.
This, together with \eqref{vector bound sym 125}, proves the result.
\end{proof}

\begin{lemma}\label{card of Iy}
Let $\xi$ be a symmetrically distributed random variable with unit variance. For every $\varepsilon>0$
and $K>0$ there is $\delta_{\ref{card of Iy}}
=\delta_{\ref{card of Iy}}(\varepsilon,K)>0$
depending on $\varepsilon$, $K$ and the distribution of $\xi$ with the following property:
whenever $N,n\in\N$, $N\ge n$; $A=(a_{ij})$ is an $N\times n$ random matrix with i.i.d.\ entries
distributed as $\xi$ and $y\in S^{n-1}$ is a vector satisfying $\|y\|_\infty\le \delta_{\ref{card of Iy}}$,
we have
$$\P\{|I_y|\le N-\varepsilon N\}\le\exp(-K N),$$
where $I_y$ is defined as in Lemma~\ref{vector bound sym}.
\end{lemma}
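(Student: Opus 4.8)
The plan is to reduce the claim to a Bernstein-type concentration estimate for the number of ``bad'' rows, applied to a single fixed vector $y$ with small $\ell_\infty$-norm. For a fixed $y=(y_1,\dots,y_n)\in S^{n-1}$ and each $i\le N$, let $Z_i=\sum_{j=1}^n a_{ij}^2 y_j^2$; these are i.i.d.\ with $\Exp Z_i=\sum_j y_j^2=1$. A row $i$ fails to lie in $I_y$ precisely when $Z_i>2$, i.e.\ when $Z_i-\Exp Z_i>1$. So $\{|I_y|\le N-\varepsilon N\}$ is exactly the event that at least $\varepsilon N$ of the i.i.d.\ indicators $\chi_{\{Z_i>2\}}$ occur, and by a Chernoff bound for sums of i.i.d.\ Bernoulli variables this has probability at most $\exp(-KN)$ as soon as $p:=\P\{Z_1>2\}$ is small enough relative to $\varepsilon$ and $K$. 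Thus the whole lemma comes down to showing that $\P\{Z_1>2\}$ can be made as small as we like by choosing $\delta_{\ref{card of Iy}}=\delta_{\ref{card of Iy}}(\varepsilon,K)$ (depending also on the law of $\xi$) small enough, uniformly over all $y\in S^{n-1}$ with $\|y\|_\infty\le\delta_{\ref{card of Iy}}$.

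The heart of the matter is therefore a weighted law of large numbers: I want $\sum_j a_{1j}^2 y_j^2$ to be close to $1$ with high probability when the weights $t_j:=y_j^2$ are spread out ($\sum_j t_j=1$, $\max_j t_j\le\delta^2$). This is exactly the setting of Lemma~\ref{law of ln}, applied not to the $a_{1j}$ themselves but to the centered variables $b_j:=a_{1j}^2-1$, which are i.i.d.\ with zero mean. Given target accuracy $\varepsilon'>0$ (to be chosen so that $\varepsilon'<1$ and so that $p\le\varepsilon'$ suffices for the Chernoff bound with parameters $\varepsilon,K$), Lemma~\ref{law of ln} produces $\delta'>0$, depending only on $\varepsilon'$ and the distribution of $b_j$ (equivalently of $\xi$), such that whenever $\sum_j t_j=1$ and $\max_j t_j\le\delta'$ we have $\P\{|\sum_j b_j t_j|>\varepsilon'\}<\varepsilon'$. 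Setting $\delta_{\ref{card of Iy}}:=\sqrt{\delta'}$ guarantees $\max_j y_j^2\le\delta'$, hence $\P\{|Z_1-1|>\varepsilon'\}<\varepsilon'$, and in particular $\P\{Z_1>2\}\le\P\{Z_1-1>1\}\le\P\{|Z_1-1|>\varepsilon'\}<\varepsilon'$ once $\varepsilon'\le 1$. Note the weights here are genuinely infinite sequences only formally; since $y\in S^{n-1}$ we just pad with zeros, which Lemma~\ref{law of ln} allows.

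Putting the pieces together: fix $\varepsilon,K>0$. Choose $\varepsilon'\in(0,1]$ small enough that the Chernoff estimate $\P\{\mathrm{Bin}(N,\varepsilon')\ge\varepsilon N\}\le\exp(-KN)$ holds for all $N$ (this needs $\varepsilon'$ small compared to $\varepsilon$, with an extra margin to absorb the factor $K$; a standard bound such as $\P\{\mathrm{Bin}(N,p)\ge\varepsilon N\}\le\exp(-N(\varepsilon\log(\varepsilon/p)-\varepsilon+p))$ makes this explicit). Feed $\varepsilon'$ into Lemma~\ref{law of ln} applied to $a_{11}^2-1$ to obtain $\delta'$, and set $\delta_{\ref{card of Iy}}=\sqrt{\delta'}$. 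Then for any $N\ge n$, any matrix $A$ with i.i.d.\ entries distributed as $\xi$, and any $y\in S^{n-1}$ with $\|y\|_\infty\le\delta_{\ref{card of Iy}}$, the rows' failure indicators are i.i.d.\ Bernoulli with success probability $\le\varepsilon'$, so $\P\{|I_y|\le N-\varepsilon N\}=\P\{\#\text{failures}\ge\varepsilon N\}\le\exp(-KN)$, as claimed.

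The main obstacle, such as it is, is purely bookkeeping: one must order the two smallness choices correctly — first pick $\varepsilon'$ from $\varepsilon$ and $K$ via the Chernoff bound, then pick $\delta'$ from $\varepsilon'$ via Lemma~\ref{law of ln} — and make sure the dependence of $\delta_{\ref{card of Iy}}$ on the distribution of $\xi$ is the only distributional dependence, which is automatic since the Chernoff step is distribution-free. There is no real analytic difficulty; the only subtlety is that we cannot get quantitative control on $\delta'$ because Lemma~\ref{law of ln} is itself non-quantitative (it rests on Lévy's continuity theorem under a bare second-moment assumption), but the statement of Lemma~\ref{card of Iy} does not ask for quantitative dependence.
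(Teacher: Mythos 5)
Your proof is correct and follows essentially the same route as the paper: both apply Lemma~\ref{law of ln} to the centered squares $a_{1j}^2-1$ to make $\P\{\sum_j a_{1j}^2y_j^2>2\}$ small, and then bound the binomial tail $\P\{\#\{i:\,Z_i>2\}\ge\varepsilon N\}$ by $\exp(-KN)$ — you invoke a Chernoff bound by name, while the paper writes out the equivalent union bound $\binom{N}{\lceil\varepsilon N\rceil}p^{\lceil\varepsilon N\rceil}$ with the explicit target $p\le\varepsilon e^{-1-K/\varepsilon}$, but these are the same estimate.
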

\begin{proof}
Fix any $K>0$ and $\varepsilon\in(0,1]$. In view of Lemma~\ref{law of ln}, there is $\delta>0$ such 
that for all $y=(y_1,y_2,\dots)\in \ell_2$ with $\|y\|=1$ and $\|y\|_\infty\le\delta$, and for a sequence of independent
random variables $a_1,a_2\dots$ distributed as $\xi$, we have
$$\P\Bigl\{\sum\limits_{j=1}^\infty a_{j}^2 y_j^2> 2\Bigr\}\le \varepsilon\exp\bigl(-1-K/\varepsilon\bigr).$$
Now, fix $N,n\in\N$ with $N\ge n$ and $y\in S^{n-1}$ with $\|y\|_\infty\le\delta$, and let $A$ be defined as above.
Then, using the last estimate, we obtain
\begin{align*}
\P\bigl\{|I_y|\le N-\varepsilon N\bigr\}
&=\P\Bigl\{\bigl|\bigl\{i\le N:\,\sum_{j} a_{ij}^2 y_j^2> 2\bigr\}\bigr|\ge\varepsilon N\Bigr\}\\
&\le {N\choose \lceil \varepsilon N\rceil}
\Bigl(\frac{\varepsilon}{e}\Bigr)^{\lceil \varepsilon N\rceil}\exp(-KN)\\
&\le \exp(-K N).
\end{align*}
\end{proof}

As an elementary consequence of Lemmas~\ref{vector bound sym} and~\ref{card of Iy} we get
\begin{lemma}\label{single v bound}
Let $\xi$ be a symmetrically distributed random variable with unit variance. For every $\varepsilon>0$
and $K>0$ there are
$\delta_{\ref{single v bound}}=\delta_{\ref{single v bound}}(\varepsilon,K)>0$
depending on $\varepsilon$, $K$ and the distribution of $\xi$, and
$L_{\ref{single v bound}}=L_{\ref{single v bound}}(K)>0$ depending only on $K$ such that, whenever $N,n\in\N$, $N\ge n$;
$A=(a_{ij})$ is an $N\times n$ random matrix with i.i.d.\ entries distributed as $\xi$, and
$y\in S^{n-1}$ is a vector satisfying $\|y\|_\infty\le \delta_{\ref{single v bound}}$, we have
$$\P\bigl\{\min\limits_{|I|\ge N-\varepsilon N}\|\Proj_{I}Ay\|\ge  L_{\ref{single v bound}}\sqrt{N}\bigr\}\le\exp(-KN).$$
\end{lemma}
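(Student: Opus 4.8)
The plan is to combine Lemmas~\ref{vector bound sym} and~\ref{card of Iy} via a simple union bound, after noting that the set $I_y$ appearing in both lemmas is the \emph{same} random set. First I would fix $K>0$ and $\varepsilon\in(0,1]$, and apply Lemma~\ref{card of Iy} with the parameters $\varepsilon$ and $K+\ln 2$ (say) to obtain the threshold $\delta_{\ref{card of Iy}}(\varepsilon,K+\ln 2)$; set $\delta_{\ref{single v bound}}$ equal to this. Then for any $N\ge n$ in $\N$, any matrix $A$ with i.i.d.\ entries distributed as $\xi$, and any $y\in S^{n-1}$ with $\|y\|_\infty\le\delta_{\ref{single v bound}}$, Lemma~\ref{card of Iy} gives $\P\{|I_y|\le N-\varepsilon N\}\le\exp(-(K+\ln 2)N)$, while Lemma~\ref{vector bound sym} (applied with the same $K$, possibly inflated by $\ln 2$) gives $\P\{\|\Proj_{I_y}Ay\|\ge L_{\ref{vector bound sym}}\sqrt{N}\}\le\exp(-(K+\ln 2)N)$. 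Define $L_{\ref{single v bound}}(K)=L_{\ref{vector bound sym}}(K+\ln 2)$.

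The key observation is that on the complement of the ``bad'' event $\{|I_y|\le N-\varepsilon N\}\cup\{\|\Proj_{I_y}Ay\|\ge L_{\ref{single v bound}}\sqrt{N}\}$ we simultaneously have $|I_y|\ge N-\varepsilon N$ and $\|\Proj_{I_y}Ay\|< L_{\ref{single v bound}}\sqrt{N}$. Since $I_y$ is then a \emph{particular} subset of cardinality at least $N-\varepsilon N$, it follows that $\min_{|I|\ge N-\varepsilon N}\|\Proj_I Ay\|\le\|\Proj_{I_y}Ay\|< L_{\ref{single v bound}}\sqrt{N}$. Hence the event $\{\min_{|I|\ge N-\varepsilon N}\|\Proj_I Ay\|\ge L_{\ref{single v bound}}\sqrt{N}\}$ is contained in the bad event, and the union bound yields
$$\P\bigl\{\min\limits_{|I|\ge N-\varepsilon N}\|\Proj_{I}Ay\|\ge L_{\ref{single v bound}}\sqrt{N}\bigr\}\le 2\exp(-(K+\ln 2)N)=\exp(-KN)\exp(\ln 2)\exp(-N\ln 2),$$
which is at most $\exp(-KN)$ for all $N\ge 1$. (Alternatively, one simply applies the two lemmas with parameter $K+1$ and absorbs the factor $2$ into $\exp(-N)$, valid for $N\ge1$; either bookkeeping works.)

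There is essentially no obstacle here: the statement is flagged in the text as ``an elementary consequence'' of the two preceding lemmas, and the only point requiring a moment's thought is the direction of the inequality in the definition of $I_y$ versus the event being estimated, namely that $\|\Proj_{I_y}Ay\|$ is an \emph{upper bound} for the minimum over large subsets rather than the other way around. One should also double-check that the quantifier on $y$ is compatible: Lemma~\ref{vector bound sym} holds for \emph{every} $y\in S^{n-1}$ with no smallness assumption, whereas Lemma~\ref{card of Iy} requires $\|y\|_\infty\le\delta_{\ref{card of Iy}}$, so the combined statement inherits exactly the constraint $\|y\|_\infty\le\delta_{\ref{single v bound}}$, as written. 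The dependence of constants is also consistent: $\delta_{\ref{single v bound}}$ depends on $\varepsilon$, $K$ and the law of $\xi$ (coming from Lemma~\ref{card of Iy}), while $L_{\ref{single v bound}}$ depends only on $K$ (coming from Lemma~\ref{vector bound sym}).
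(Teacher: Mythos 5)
Your proof is correct and is exactly the argument the paper intends: the paper states Lemma~\ref{single v bound} as ``an elementary consequence'' of Lemmas~\ref{vector bound sym} and~\ref{card of Iy} without spelling out the union bound, and your write-up supplies precisely that, including the key observation that on the good event $I_y$ is itself a witness subset of cardinality at least $N-\varepsilon N$, and that the constant dependencies ($\delta$ on $\varepsilon,K$ and the law of $\xi$; $L$ only on $K$) are inherited correctly.
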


\begin{lemma}\label{cube bound}
Let $\xi$ be a symmetrically distributed random variable with unit variance. For every $\varepsilon>0$
and $K>0$ there are $n_{\ref{cube bound}}=n_{\ref{cube bound}}(\varepsilon,K)\in\N$
depending on $\varepsilon$, $K$ and the distribution of $\xi$, and
$L_{\ref{cube bound}}=L_{\ref{cube bound}}(K)>0$ depending only on $K$ such that, whenever $N\ge n\ge n_{\ref{cube bound}}$ and
$A=(a_{ij})$ is an $N\times n$ random matrix with i.i.d.\ entries distributed as $\xi$, we have
$$\P\bigl\{\min\limits_{|I|\ge N-\varepsilon N}\max\limits_{y\in B_\infty^n}
\|\Proj_{I}Ay\|\ge  L_{\ref{cube bound}}\sqrt{nN}\bigr\}\le\exp(-KN).$$
\end{lemma}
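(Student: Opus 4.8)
The plan is to reduce the maximum over the cube $B_\infty^n$ to a maximum over its $2^n$ vertices, to exhibit a \emph{single} random index set that simultaneously works for every vertex, and then to control it using Lemmas~\ref{vector bound sym} and~\ref{card of Iy}.

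Since for a fixed $I$ the map $y\mapsto\|\Proj_I Ay\|$ is convex, its maximum over the polytope $B_\infty^n=[-1,1]^n$ is attained at a vertex; as the vertices $w\in\{-1,1\}^n$ have Euclidean norm $\sqrt n$, writing $y_w:=w/\sqrt n\in S^{n-1}$ we get
\[
\max_{y\in B_\infty^n}\|\Proj_I Ay\|=\max_{w\in\{-1,1\}^n}\|\Proj_I Aw\|=\sqrt n\,\max_{w\in\{-1,1\}^n}\|\Proj_I Ay_w\|.
\]
The key point is that the random set $I_{y_w}$ attached to $y_w$ by Lemma~\ref{vector bound sym} is the \emph{same} for every vertex $w$: since $(y_w)_j^2=1/n$ for all $j$, we have $\sum_{j=1}^n a_{ij}^2(y_w)_j^2=\frac1n\sum_{j=1}^n a_{ij}^2$, so that
\[
I_{y_w}=\Bigl\{i\le N:\ \frac1n\sum_{j=1}^n a_{ij}^2\le 2\Bigr\}=:I^*
\]
does not depend on $w$. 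Hence it suffices to show that, each with probability at least $1-\exp(-(K+1)N)$, we have $|I^*|\ge N-\varepsilon N$ and $\max_{w}\|\Proj_{I^*}Ay_w\|<L_{\ref{cube bound}}\sqrt N$ for a suitable $L_{\ref{cube bound}}(K)$.

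For the cardinality bound, apply Lemma~\ref{card of Iy} with parameters $\varepsilon$ and $K+1$ to a single vertex, say $w=(1,1,\dots,1)$: provided $n$ is large enough that $n^{-1/2}\le\delta_{\ref{card of Iy}}(\varepsilon,K+1)$ (this is where $n_{\ref{cube bound}}(\varepsilon,K):=\lceil\delta_{\ref{card of Iy}}(\varepsilon,K+1)^{-2}\rceil$, depending on $\varepsilon$, $K$ and the law of $\xi$, enters), we get $\P\{|I^*|\le N-\varepsilon N\}\le\exp(-(K+1)N)$. For the norm bound, set $L_{\ref{cube bound}}:=L_{\ref{vector bound sym}}(K+1+\ln 2)$ and apply Lemma~\ref{vector bound sym} with $K$ replaced by $K+1+\ln 2$ separately to each vertex $w$; since $I_{y_w}=I^*$, a union bound over the $2^n\le\exp(N\ln 2)$ vertices (using $N\ge n$) yields
\[
\P\Bigl\{\max_{w\in\{-1,1\}^n}\|\Proj_{I^*}Ay_w\|\ge L_{\ref{cube bound}}\sqrt N\Bigr\}\le\exp(N\ln 2)\exp\bigl(-(K+1+\ln 2)N\bigr)=\exp(-(K+1)N).
\]
On the complement of these two events the set $I=I^*$ satisfies $|I|\ge N-\varepsilon N$ and, by the displayed identity above, $\max_{y\in B_\infty^n}\|\Proj_I Ay\|=\sqrt n\,\max_w\|\Proj_{I^*}Ay_w\|<L_{\ref{cube bound}}\sqrt{nN}$, whence $\min_{|I|\ge N-\varepsilon N}\max_{y\in B_\infty^n}\|\Proj_I Ay\|<L_{\ref{cube bound}}\sqrt{nN}$; the failure probability is at most $2\exp(-(K+1)N)\le\exp(-KN)$ for $N\ge n\ge n_{\ref{cube bound}}$.

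The step I expect to be the heart of the matter is the one used implicitly above: a naive approach through an $\ell_\infty$-net of $S^{n-1}$ of size $\exp(\Theta(n))$ (Lemma~\ref{cubic net}) would require intersecting exponentially many ``good'' coordinate sets, each of which may miss up to $\varepsilon N$ coordinates, so the intersection is useless. This is circumvented because the set $I_{y_w}$ furnished by Lemma~\ref{vector bound sym} depends only on the \emph{squares} of the coordinates of $y_w$, hence is blind to the signs of $w$, so a single $I^*$ handles all $2^n$ vertices of the cube at once. The remaining ingredients (convexity of $\|\Proj_I A\cdot\|$, the normalization $w\mapsto w/\sqrt n$, and absorbing the factor $2^n$ into the exponential via $N\ge n$) are routine.
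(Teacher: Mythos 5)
Your proof is correct and follows essentially the same route as the paper's: you reduce to the $2^n$ vertices of the cube by convexity, observe that the random index set $I_{y}$ from Lemma~\ref{vector bound sym} is the same for every vertex (since it depends only on the squares of the coordinates), and then control $|I^*|$ and $\max_w\|\Proj_{I^*}Ay_w\|$ via Lemmas~\ref{card of Iy} and~\ref{vector bound sym} with a union bound absorbing the factor $2^n$. The only cosmetic difference is your choice of parameter $K+1+\ln 2$ in place of the paper's slightly cruder $K+2$ when invoking Lemma~\ref{vector bound sym}.
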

\begin{proof}
Fix any $K>0$ and $\varepsilon>0$ and define
$n_{\ref{cube bound}}=\lceil\delta_{\ref{card of Iy}}(\varepsilon,K+1)^{-2}\rceil$,
where $\delta_{\ref{card of Iy}}>0$ is taken from Lemma~\ref{card of Iy}.
Now, choose any $N\ge n\ge n_{\ref{cube bound}}$ and let
$A=(a_{ij})$ be an $N\times n$ random matrix with i.i.d.\ entries
distributed as $\xi$.
Let $V$ be the set of vertices of the cube $\frac{1}{\sqrt{n}}B_\infty^n=[-\frac{1}{\sqrt{n}},\frac{1}{\sqrt{n}}]^n$.
In view of Lemma~\ref{card of Iy}, any $v\in V$ satisfies
$$\P\{|I_v|\le N-\varepsilon N\}\le\exp\bigl(-(K+1) N\bigr).$$
Next, by Lemma~\ref{vector bound sym}, for $L=L_{\ref{vector bound sym}}(K+2)>0$ we have
$$\P\bigl\{\|\Proj_{I_v}Av\|\ge  L\sqrt{N}\bigr\}\le\exp\bigl(-(K+2)N\bigr)$$
for all $v\in V$. Note that for any $u,v\in V$ the random sets $I_u$ and $I_v$ coincide everywhere on $\Omega$.
Hence, together with the above estimates, we get
\begin{align*}
\P&\bigl\{\min\limits_{|I|\ge N-\varepsilon N}\max\limits_{v\in V}\|\Proj_{I}Av\|\ge  L\sqrt{N}\bigr\}\\
&\le\exp\bigl(-(K+1) N\bigr)+\P\bigl\{\max\limits_{v\in V}\|\Proj_{I_v}Av\|\ge  L\sqrt{N}\bigr\}\\
&\le\exp(-KN).
\end{align*}
It remains to note that for any $I\subset \{1,2,\dots,N\}$ and $y\in B_\infty^n$ we have
$$\|\Proj_{I}Ay\|\le\sqrt{n}\max\limits_{v\in V}\|\Proj_{I}Av\|$$
everywhere on $\Omega$.
\end{proof}

In the following statement, we bound the quantity \eqref{the quantity} assuming that the matrix entries are
symmetrically distributed. The lemmas above provide estimates for $\min\limits_{|I|\ge N-\varepsilon N}\|\Proj_{I}Ay\|$
for individual vectors on the sphere as well as an upper bound on the cube $\frac{1}{\sqrt{n}}B_\infty^n$.
To derive an estimate for the supremum over the sphere, we shall embed $S^{n-1}$ into Minkowski sum
of a multiple of $B_\infty^n$ and two specially chosen finite sets (see \eqref{weak lsv sym decomp} in the proof below).
This way each vector $y\in S^{n-1}$ can be ``decomposed'' as a sum of three vectors with particular characterestics.
This approach is similar to splitting the unit sphere into sets of ``close to sparse'' and ``far from sparse'' vectors
introduced in \cite{LPRT} and subsequently used in \cite{RV_RECT}, \cite{RV_SQUARE}.

\begin{lemma}\label{weak lsv sym}
Let $\xi$ be a symmetrically distributed random variable with unit variance, and let $\varepsilon\in(0,1]$.
Then there are $N_{\ref{weak lsv sym}}=N_{\ref{weak lsv sym}}(\varepsilon)\in\N$ depending on
$\varepsilon$ and the distribution of $\xi$ and
$w_{\ref{weak lsv sym}}=w_{\ref{weak lsv sym}}(\varepsilon)>0$ depending only on $\varepsilon$ such that,
whenever $N\ge N_{\ref{weak lsv sym}}$,
$n\le N$ and $A=(a_{ij})$ is an $N\times n$ random matrix with i.i.d.\ entries distributed as $\xi$,
we have
$$\P\bigl\{\sup\limits_{y\in S^{n-1}}\min\limits_{|I|\ge N-\varepsilon N}\|\Proj_{I}Ay\|\le C_{\ref{weak lsv sym}}\sqrt{N}\bigr\}
\ge 1-\exp(-w_{\ref{weak lsv sym}} N),$$
where $C_{\ref{weak lsv sym}}>0$ is a universal constant. 
\end{lemma}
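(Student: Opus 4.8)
The plan is to establish the estimate by an $\varepsilon$-net argument on $S^{n-1}$, but a naive net of the whole sphere fails because the quantity $\min_{|I|\ge N-\varepsilon N}\|\Proj_I Ay\|$ can be large for very sparse $y$ (where the coordinate projection cannot help), and because the operator norm of $A$ may be too large to control the approximation error. To get around this I would use the decomposition of the sphere into ``spread'' and ``sparse'' parts that is hinted at in the remark preceding the statement: write each $y\in S^{n-1}$ as
\begin{equation}\label{weak lsv sym decomp}
y=u+v+t,
\end{equation}
where $u$ lies in a finite set of vectors supported on few coordinates (the ``sparse part''), $v$ is a vector with $\|v\|_\infty$ small (a multiple of $n^{-1/2}$, the ``spread part'', which can be absorbed into $\frac{1}{\sqrt n}B_\infty^n$), and $t$ is a small correction coming from a coarse $\ell_\infty$-net. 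More concretely, following the approach of \cite{LPRT}, I would fix a sparsity level $k=c\varepsilon n$, let $u$ range over an $\varepsilon$-net (in the Euclidean metric) of the set of unit vectors supported on coordinate subsets of size $\le k$, so that $\|u\|\le 1$ and $\Proj_I A$ applied to $u$ is controlled by Lemma~\ref{single vector est} applied coordinatewise (or by crude bounds on the $k$ active rows/columns), and let $v$ have all coordinates $\le\delta_{\ref{single v bound}}/\sqrt n$ so that after rescaling by $\sqrt n$ it lies in the cube handled by Lemma~\ref{cube bound}.

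The key steps, in order, are as follows. First, fix the parameters: choose $k$, the sparsity level, and the net cardinalities so that the total number of points in the relevant finite sets is at most $\exp(c'\varepsilon N)$ for a small enough $c'$ (here Lemma~\ref{usual net} gives an $\varepsilon$-net of cardinality $(3/\varepsilon)^k\binom nk\le\exp(c'' k\log(1/\varepsilon))$ for the sparse vectors, and the coarse $\ell_\infty$-net of the sphere has cardinality $\exp(Cn)$ by Lemma~\ref{cubic net}). Second, apply Lemma~\ref{cube bound} with an appropriately large $K=K(\varepsilon)$ so that $\min_{|I|\ge N-\varepsilon N}\max_{y\in B_\infty^n}\|\Proj_I Ay\|\le L_{\ref{cube bound}}\sqrt{nN}$ except on an event of probability $\exp(-KN)$; after rescaling, this controls the ``spread part'' $v$. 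Third, apply Lemma~\ref{single v bound} to each point of the spread $\ell_\infty$-net (those points are automatically $\delta$-spread by construction) and take a union bound over $\exp(Cn)\le\exp(CN)$ points, again choosing $K$ large relative to $C$; this handles the $t$ correction. Fourth, handle the sparse part $u$: for a fixed sparse support of size $\le k=c\varepsilon n\le c\varepsilon N$, the vector $Au$ has at most... well, actually here one uses that $\min_{|I|\ge N-\varepsilon N}\|\Proj_I Au\|$ only needs the $N-\varepsilon N$ smallest coordinates, and applies Lemma~\ref{single vector est} (or its proof) to the columns indexed by the support, bounding $\|Au\|$ after throwing away the $\varepsilon N$ largest coordinates by $O(\sqrt{kN})=O(\sqrt{\varepsilon}N)$ with probability $1-\exp(-c\varepsilon N)$, then union bounding over the $\exp(c''k\log(1/\varepsilon))$ net points and the $\binom nk$ supports. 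Fifth, combine: on the intersection of these high-probability events, for arbitrary $y\in S^{n-1}$ pick the same index set $I$ of size $\ge N-3\varepsilon N$ (or $N-O(\varepsilon)N$, reset $\varepsilon$ at the end) on which all three of $\|\Proj_I Au\|$, $\|\Proj_I Av\|$, $\|\Proj_I At\|$ are simultaneously small — this requires that the three ``bad coordinate'' sets can be chosen disjointly, so one takes $I$ to be the complement of the union of the three bad sets, each of size $\le\varepsilon N$. Then by the triangle inequality $\min_{|I'|\ge N-3\varepsilon N}\|\Proj_{I'}Ay\|\le\|\Proj_I Ay\|\le\|\Proj_I Au\|+\|\Proj_I Av\|+\|\Proj_I At\|\le C\sqrt N$, and iterating the net construction (or simply reparametrizing $\varepsilon\mapsto\varepsilon/3$ at the outset) gives the claim with $N_{\ref{weak lsv sym}}$ absorbing all the threshold requirements and $w_{\ref{weak lsv sym}}=c\varepsilon$.

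The main obstacle I expect is the bookkeeping around the decomposition \eqref{weak lsv sym decomp} and, in particular, making sure that the index set $I$ witnessing the minimum can be taken the \emph{same} for all three pieces $u,v,t$ simultaneously — this is why the statement is about $\min_{|I|\ge N-\varepsilon N}$ of a single projection rather than three separate minima, and the trick is to pass to a slightly smaller cardinality $N-3\varepsilon N$ so that the complement of the union of the three individual bad sets still has the required size; one then rescales $\varepsilon$. A secondary subtlety is the sparse part: one must verify that a vector supported on $\le k$ coordinates is handled either by brute force (its image lives in a $k$-dimensional coordinate subspace of $\R^n$, but the $N$ rows are still full-dimensional, so one genuinely needs a concentration estimate, supplied by Lemma~\ref{single vector est} together with a union bound over $\binom nk$ supports, which is affordable precisely because $k=c\varepsilon n$ and the constant $c$ can be chosen small). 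Finally, one has to check that all the ``$K$ large enough'' choices in Lemmas~\ref{single v bound} and~\ref{cube bound} can be made to beat the entropy terms $\exp(Cn)$ and $\exp(c''k\log(1/\varepsilon))$ uniformly, which is the reason those lemmas are stated with an arbitrary $K$ and with the $L$'s depending only on $K$ and not on $\varepsilon$.
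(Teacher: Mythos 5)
Your overall plan (decompose the sphere into a sparse net, a spread net, and a small $\ell_\infty$ remainder; handle them respectively via Lemma~\ref{single vector est}, Lemma~\ref{single v bound}, Lemma~\ref{cube bound}; then intersect the three witnessing index sets and reparametrize $\varepsilon\mapsto\varepsilon/3$) is exactly the paper's strategy, and steps two, three, and five are essentially right. The gap is in step four, the sparse part. You bound $\min_{|I|\ge N-\varepsilon N}\|\Proj_I Au\|$ by $O(\sqrt{kN})$ ``by applying Lemma~\ref{single vector est} to the columns indexed by the support''; with your choice $k=c\varepsilon n$ this is $\Theta(\sqrt{\varepsilon}\sqrt{nN})$, which for $n$ comparable to $N$ is $\Theta(N)$, an order of magnitude larger than the $C\sqrt{N}$ needed for the lemma. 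Your own step five asserts that all three projected pieces are $\le C\sqrt N$, so the argument as written is internally inconsistent. The point you are missing is that the sparsity of $u$ is irrelevant to the size of $\Proj_I Au$: for any $u$ with $\|u\|\le 2$, the coordinates of $Au$ are independent, mean zero, with variance $\|u\|^2\le 4$, so Lemma~\ref{single vector est} applies to $Au$ as a single $N$-dimensional vector and gives $\min_{|I|\ge N-\varepsilon N}\|\Proj_I Au\|\le 2C_{\ref{single vector est}}\sqrt N$ with probability $1-\exp(-c_{\ref{single vector est}}\varepsilon N)$. The sparsity of $u$ enters \emph{only} through the cardinality of the net. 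A per-column (or per-row) bound loses the factor $\sqrt{k}$ and cannot be repaired.

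A secondary, cosmetic difference: the paper takes the sparsity level to be $\sqrt N$ and the spread threshold to be $N^{-1/4}$, so the sparse net has cardinality $(12eN)^{\sqrt N}=\exp(o(N))$, which is trivially beaten by $\exp(-c\varepsilon N)$ with no constraint on constants; your choice $k=c\varepsilon n$ and a constant threshold would also work with the corrected $O(\sqrt N)$ estimate, but then you need $c$ small enough relative to $\log(1/\varepsilon)$ to control the entropy, a constraint the paper's choice avoids. Also note that the paper does not track $\binom nk$ supports separately from the within-support $\varepsilon$-net; it simply takes one $N^{-1/2}$-net of all $\sqrt N$-sparse vectors in $2B_2^N$ at once, which is cleaner but equivalent in spirit.
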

\begin{proof}
Fix $\varepsilon\in(0,1]$ and let $N_{\ref{weak lsv sym}}$ be the smallest integer such that
\begin{enumerate}
\item[1)] $\lfloor N_{\ref{weak lsv sym}}^{1/4}\rfloor\delta_{\ref{single v bound}}(\varepsilon/3,2C_{\ref{cubic net}})\ge 1$;
\item[2)] $N_{\ref{weak lsv sym}}\ge\max\bigl(N_{\ref{single vector est}}(\varepsilon/3),n_{\ref{cube bound}}(\varepsilon/3,1)\bigr)$;
\item[3)] for all $N\ge N_{\ref{weak lsv sym}}$,
$$(12e N)^{\sqrt{N}}\exp(-c_{\ref{single vector est}}\varepsilon N/3)+e^{-C_{\ref{cubic net}}N}+e^{-N}
\le \exp\bigl(-\min(c_{\ref{single vector est}}\varepsilon/6,C_{\ref{cubic net}}/2,1/2)N\bigr).$$
\end{enumerate}

Choose $N\ge N_{\ref{weak lsv sym}}$. Without loss of generality, we can assume that $n=N$. Let $A$ be as stated above.

We say that a vector $y\in\R^N$ is $m$-sparse if it has at most $m$ non-zero coordinates.
It is not difficult to verify, using Lemma~\ref{usual net},
that the set of all $\sqrt{N}$-sparse vectors in $2B_2^N$ admits a $N^{-1/2}$-net $\Net_1$
of cardinality at most ${N\choose \lfloor\sqrt{N}\rfloor}(6\sqrt{N})^{\sqrt{N}}\le (12e N)^{\sqrt{N}}$.
Denote
$$T=\bigl\{y\in S^{N-1}:\,\|y\|_\infty\le 1/\lfloor N^{1/4}\rfloor\bigr\}.$$
By Lemma~\ref{cubic net}, there is a finite subset $\Net_2\subset T$ of cardinality at most
$\exp(C_{\ref{cubic net}}N)$ such that for any $y\in T$ there is $y'\in\Net_2$ with
$\|y-y'\|_\infty\le N^{-1/2}$.

Now, we claim that
\begin{equation}\label{weak lsv sym decomp}
S^{N-1}\subset \Net_1+\Net_2+\frac{2}{\sqrt{N}}B_\infty^N,
\end{equation}
i.e.\ any vector $y=(y_1,y_2,\dots,y_N)\in S^{N-1}$ can be represented as $y=y^1+y^2+y^3$
for some $y^1\in \Net_1$, $y^2\in\Net_2$ and $y^3\in \frac{2}{\sqrt{N}}B_\infty^N$.
Indeed, we can always find a subset $J\subset\{1,2,\dots,N\}$
of cardinality $\lfloor \sqrt{N}\rfloor$ such that $|y_j|\le 1/\lfloor N^{1/4}\rfloor$ whenever $j\notin J$.
Denote $r=\sqrt{1-\|y-\Proj_{J}y\|^2}$ and $\tilde y=\Proj_{J}y-r|J|^{-1/2}\sum_{j\in J}e_j$.
Note that $\tilde y$ is $\sqrt{N}$-sparse and has the Euclidean norm at most $2$, so
there is $y^1\in\Net_1$ such that
$\|\tilde y-y^1\|_\infty\le\|\tilde y-y^1\|\le N^{-1/2}$.
Next, the vector $y-\tilde y$ satisfies
$\|y-\tilde y\|=1$ and $\|y-\tilde y\|_\infty\le 1/\lfloor N^{1/4}\rfloor$, i.e.\
$y-\tilde y\in T$. Hence there is $y^2\in\Net_2$
such that $\|y-\tilde y-y^2\|_\infty\le N^{-1/2}$.
Finally, for the vector $y^3=y-y^1-y^2$ we get
$$\|y-y^1-y^2\|_\infty\le \|\tilde y-y^1\|_\infty+\|y-\tilde y-y^2\|_\infty\le \frac{2}{\sqrt{N}},$$
so $y^3\in \frac{2}{\sqrt{N}}B_\infty^N$. This proves \eqref{weak lsv sym decomp}.

For each $y^1\in\Net_1$, in view of Lemma~\ref{single vector est} and the condition $N\ge N_{\ref{single vector est}}(\varepsilon/3)$, we have
$$\P\bigl\{\min\limits_{|I|\ge N-\varepsilon N/3}\|\Proj_{I}Ay^1\|> 2C_{\ref{single vector est}}\sqrt{N}\bigr\}
\le \exp(-c_{\ref{single vector est}}\varepsilon N/3).$$
Next, for every $y^2\in\Net_2$, Lemma~\ref{single v bound} together with the inequality
$\lfloor N^{1/4}\rfloor\delta_{\ref{single v bound}}(\varepsilon/3,2C_{\ref{cubic net}})\ge 1$
and $\|y^2\|_\infty\le 1/\lfloor N^{1/4}\rfloor$ implies that
$$\P\bigl\{\min\limits_{|I|\ge N-\varepsilon N/3}\|\Proj_{I}Ay^2\|\ge  L_{\ref{single v bound}}\sqrt{N}\bigr\}
\le\exp(-2C_{\ref{cubic net}}N)$$
for some constant $L_{\ref{single v bound}}>0$.
Finally, by Lemma~\ref{cube bound} and in view of the condition $N\ge n_{\ref{cube bound}}(\varepsilon/3,1)$ we have
$$\P\bigl\{\min\limits_{|I|\ge N-\varepsilon N/3}\max\limits_{y\in \frac{1}{\sqrt{N}}B_\infty^N}
\|\Proj_{I}Ay\|\ge  L_{\ref{cube bound}}\sqrt{N}\bigr\}\le\exp(-N),$$
where $L_{\ref{cube bound}}>0$ is a universal constant.
Let $\mathcal E$ denote the event
\begin{align*}
\mathcal E
=\Bigl\{\omega\in\Omega:\,&\mbox{for every }y^1\in\Net_1\mbox{ there is a set }I_1=I_1(y^1)\mbox{ with }|I_1|\ge N-\varepsilon N/3\\
&\mbox{such that }\|\Proj_{I_1}A(\omega)y^1\|\le 2C_{\ref{single vector est}}\sqrt{N}\mbox{ {\bf AND}}\\
&\mbox{for every }y^2\in\Net_2\mbox{ there is a set }I_2=I_2(y^2)\mbox{ with }|I_2|\ge N-\varepsilon N/3\\
&\mbox{such that }\|\Proj_{I_2}A(\omega)y^2\|\le L_{\ref{single v bound}}\sqrt{N}\mbox{ {\bf AND}}\\
&\mbox{there is a set }I_3\mbox{ with }|I_3|\ge N-\varepsilon N/3\\
&\mbox{such that }\max\limits_{y\in \frac{2}{\sqrt{N}}B_\infty^N}\|\Proj_{I_3}A(\omega)y\|\le  2L_{\ref{cube bound}}\sqrt{N}\Bigr\}.
\end{align*}
Then from the above probability estimates and the definition of $N_{\ref{weak lsv sym}}$ we obtain
$$\P(\mathcal E)\ge 1-(12e N)^{\sqrt{N}}\exp(-c_{\ref{single vector est}}\varepsilon N/3)
-\exp(-C_{\ref{cubic net}}N)-\exp(-N)\ge 1-\exp(-w_{\ref{weak lsv sym}}N),$$
where $w_{\ref{weak lsv sym}}=\min\bigl(\frac{c_{\ref{single vector est}}\varepsilon}{6},\frac{C_{\ref{cubic net}}}{2},\frac{1}{2}\bigr)$.

Finally, take any $\omega\in\mathcal E$ and any $y\in S^{N-1}$, and let
$y^1\in\Net_1$, $y^2\in\Net_2$ and $y^3\in \frac{2}{\sqrt{N}}B_\infty^N$ satisfy $y=y^1+y^2+y^3$.
Then, by the definition of $\mathcal E$, there are sets $I_1,I_2,I_3\subset\{1,2,\dots,N\}$ with
$|I_\ell|\ge N-\varepsilon N/3$ ($\ell=1,2,3$) such that
\begin{align*}
&\|\Proj_{I_1}A(\omega)y^1\|\le 2C_{\ref{single vector est}}\sqrt{N};\\
&\|\Proj_{I_2}A(\omega)y^2\|\le L_{\ref{single v bound}}\sqrt{N};\\
&\|\Proj_{I_3}A(\omega)y^3\|\le  2L_{\ref{cube bound}}\sqrt{N}.
\end{align*}
Note that the intersection $I=I_1\cap I_2\cap I_3$ necessarily satisfies $|I|\ge N-\varepsilon N$,
and from the last inequalities we get
$\|\Proj_{I}A(\omega)y\|\le (2C_{\ref{single vector est}}+L_{\ref{single v bound}}+2L_{\ref{cube bound}})\sqrt{N}$.
Since our choice of $y\in S^{N-1}$ and $\omega\in\mathcal E$ was arbitrary, we get
$$\P\bigl\{\sup\limits_{y\in S^{N-1}}\min\limits_{|I|\ge N-\varepsilon N}\|\Proj_{I}Ay\|\le
(2C_{\ref{single vector est}}+L_{\ref{single v bound}}+2L_{\ref{cube bound}})\sqrt{N}\bigr\}
\ge \P(\mathcal E)\ge 1-\exp\bigl(-w_{\ref{weak lsv sym}} N\bigr).$$
\end{proof}

Finally, we can state the main result of the section.

\begin{prop}\label{weak lsv nonsym}
Let $\xi$ be a random variable with zero mean and unit variance, and let $\varepsilon\in(0,1]$.
Then there are $N_{\ref{weak lsv nonsym}}=N_{\ref{weak lsv nonsym}}(\varepsilon)\in\N$
depending on $\varepsilon$ and the distribution of $\xi$ and
$w_{\ref{weak lsv nonsym}}=w_{\ref{weak lsv nonsym}}(\varepsilon)>0$
depending only on $\varepsilon$ such that,
whenever $N\ge N_{\ref{weak lsv nonsym}}$,
$n\le N$ and $A=(a_{ij})$ is an $N\times n$ random matrix with i.i.d.\ entries distributed as $\xi$,
we have
$$\P\bigl\{\sup\limits_{y\in S^{n-1}}\min\limits_{|I|\ge N-\varepsilon N}\|\Proj_{I}Ay\|\le C_{\ref{weak lsv nonsym}}\sqrt{N}\bigr\}
\ge 1-\exp(-w_{\ref{weak lsv nonsym}} N),$$
where $C_{\ref{weak lsv nonsym}}>0$ is a universal constant. 
\end{prop}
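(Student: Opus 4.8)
The plan is to reduce the non-symmetric case to the already-established symmetric case (Lemma~\ref{weak lsv sym}) by the standard symmetrization trick of passing to an independent copy. First I would let $A'=(a'_{ij})$ be an independent copy of $A$, and consider the symmetrized matrix $\hat A=A-A'$, whose entries are i.i.d., symmetrically distributed, with mean zero and variance $2$. After rescaling by $1/\sqrt{2}$, Lemma~\ref{weak lsv sym} applies to $\hat A/\sqrt{2}$ and gives, for $N\ge N_{\ref{weak lsv sym}}(\varepsilon/2)$ say, that with probability at least $1-\exp(-w_{\ref{weak lsv sym}}(\varepsilon/2)\,N)$ the event
$$\sup\limits_{y\in S^{n-1}}\min\limits_{|I|\ge N-\varepsilon N/2}\|\Proj_{I}\hat Ay\|\le C\sqrt{N}$$
holds for a suitable absolute constant $C$.

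Next I would transfer this bound from $\hat A=A-A'$ back to $A$ itself. The key point is that for a fixed $y\in S^{n-1}$, the vector $A'y$ has coordinates that are i.i.d.\ mean-zero unit-variance random variables (sums $\sum_j a'_{ij}y_j$), so by Lemma~\ref{single vector est} applied to the vector $A'y\in\R^N$ (with parameter $\varepsilon/2$ and $N\ge N_{\ref{single vector est}}(\varepsilon/2)$) we have $\min_{|I|\ge N-\varepsilon N/2}\|\Proj_I A'y\|\le C_{\ref{single vector est}}\sqrt{N}$ with probability at least $1-\exp(-c_{\ref{single vector est}}\varepsilon N/2)$. However, a single application for one fixed $y$ is not enough, since we need the supremum over the sphere; so instead I would prove the analogue of Lemma~\ref{weak lsv sym} for $A'$ directly — but that is precisely what we are trying to prove! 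The clean way around this is to repeat, essentially verbatim, the net/decomposition argument of Lemma~\ref{weak lsv sym} but using the symmetrized matrix wherever symmetry was used, namely in Lemmas~\ref{vector bound sym}, \ref{card of Iy}, \ref{single v bound}, \ref{cube bound}: for a fixed vector $y$ one writes $\Proj_I A y$, bounds it by $\Proj_I \hat A y + \Proj_I A' y$ on the coordinate set $I$, controls the first term by the symmetric machinery and the second term (coordinate-wise) by Lemma~\ref{single vector est}, at the cost of shrinking the index set from $N-\varepsilon N/3$ to $N-\varepsilon N/2$ when intersecting. Concretely, for the finite sets $\Net_1$ and $\Net_2$ this is a union bound over at most $(12eN)^{\sqrt N}$, resp.\ $\exp(C_{\ref{cubic net}}N)$, vectors, and for the cube term $\frac{1}{\sqrt N}B_\infty^N$ one uses the vertices $V$ exactly as in Lemma~\ref{cube bound}, noting again that $A'v$ has i.i.d.\ mean-zero coordinates for each vertex $v$.

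Once these ``non-symmetric versions'' of Lemmas~\ref{single v bound} and~\ref{cube bound} are in hand (with the net cardinalities and the additive loss in the index set accounted for), the decomposition $S^{N-1}\subset\Net_1+\Net_2+\frac{2}{\sqrt N}B_\infty^N$ from \eqref{weak lsv sym decomp} is reused unchanged: each $y=y^1+y^2+y^3$ yields index sets $I_1,I_2,I_3$ each of size at least $N-\varepsilon N/3$, whose intersection has size at least $N-\varepsilon N$, and on that intersection $\|\Proj_I Ay\|\le \|\Proj_I Ay^1\|+\|\Proj_I Ay^2\|+\|\Proj_I Ay^3\|$ is bounded by an absolute constant times $\sqrt N$. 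Choosing $N_{\ref{weak lsv nonsym}}(\varepsilon)$ large enough to absorb the net cardinalities into the exponential tails (exactly the role condition~3) played in Lemma~\ref{weak lsv sym}) and setting $w_{\ref{weak lsv nonsym}}(\varepsilon)$ to the minimum of the relevant rates finishes the proof, with $C_{\ref{weak lsv nonsym}}$ an absolute constant.

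The main obstacle I anticipate is purely bookkeeping rather than conceptual: one must be careful that the symmetrization does not destroy the \emph{uniformity} over the sphere. Symmetrization works transparently for a single fixed vector $y$ (there $A'y$ has i.i.d.\ coordinates), but the supremum over $S^{n-1}$ cannot be symmetrized directly, so the argument must be run at the level of the individual net points and the cube vertices — i.e.\ one really does need to re-derive the analogues of Lemmas~\ref{vector bound sym}--\ref{cube bound} for the shifted/non-symmetric entries, combining the symmetric bound on $\hat Ay$ with a Lemma~\ref{single vector est} bound on $A'y$, and only then apply the net argument. The constants $K$ in those lemmas must be taken large enough (as functions of $\varepsilon$, via $C_{\ref{cubic net}}$) so that the union bounds over $\Net_1,\Net_2$ and $V$ still leave an exponentially small failure probability; this is the same calibration already done inside Lemma~\ref{weak lsv sym}, so no new difficulty arises beyond tracking an extra factor of $2$ or $3$ in the sparsity and index-set parameters.
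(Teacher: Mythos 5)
Your proposal takes a genuinely different route from the paper, and as written it has a gap at the spread-vector step.

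You explicitly note that ``the supremum over $S^{n-1}$ cannot be symmetrized directly'' and conclude that one must re-derive the analogues of Lemmas~\ref{vector bound sym}--\ref{cube bound} for non-symmetric entries, controlling $\Proj_I A y$ by $\Proj_I \hat A y + \Proj_I A' y$ at each net point and using Lemma~\ref{single vector est} for the $A'$-term. The problem is at $\Net_2$: this net has cardinality up to $\exp(C_{\ref{cubic net}}N)$, so a union bound requires the per-vector failure probability for the $A'$-term to be $\exp(-KN)$ with $K > C_{\ref{cubic net}}$. In the symmetric machinery this is available because Lemma~\ref{vector bound sym}/Lemma~\ref{single v bound} have a tunable exponent $K$ (traded against the constant $L(K)$), which in turn relies crucially on the Rademacher symmetrization to turn each row into a bounded sub-Gaussian sum on $I_y$. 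But Lemma~\ref{single vector est} has \emph{no} tunable $K$: its rate is fixed at $c_{\ref{single vector est}}\varepsilon N$, and one cannot amplify it without the bound $C_{\ref{single vector est}}\sqrt{N}$ blowing up (try pushing $M$ in its proof and watch the $\|\tilde X\|$-bound grow with $M$). So for small $\varepsilon$ the union bound over $\Net_2$ fails. The same objection applies to your ``non-symmetric Lemma~\ref{cube bound}'' if the $A'$-part of each vertex is treated by Lemma~\ref{single vector est}. The calibration you invoke (``the same calibration already done inside Lemma~\ref{weak lsv sym}'') worked there only because symmetry gave free control of the exponent.

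The paper avoids redoing the net argument altogether. It symmetrizes \emph{at the level of the supremum}, which you dismissed as infeasible, via a conditional argument: pick a Borel function $f$ so that $\tilde Y = f(A)$ nearly attains $\sup_{y}\min_{|I|\ge N-\varepsilon N}\|\Proj_I A y\|$; since $\tilde Y$ depends only on $A$ and is independent of $A'$, and for every \emph{fixed} $y$ one has $\P\{\|A'y\|\le 2\sqrt{N}\}\ge 3/4$ by Chebyshev, conditioning on $A$ shows the joint event $\{\min_I\|\Proj_I A\tilde Y\|\text{ large}\}\cap\{\|A'\tilde Y\|\le 2\sqrt{N}\}$ has probability at least $\tfrac34$ times the first event's probability. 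On that joint event, $\min_I\|\Proj_I(A-A')\tilde Y\|\ge \min_I\|\Proj_I A\tilde Y\|-\|A'\tilde Y\|$ is also large, so $\sup_y\min_I\|\Proj_I(A-A')y\|$ is large, which Lemma~\ref{weak lsv sym} (applied to the symmetric $A-A'$, rescaled) rules out with probability $1-\exp(-wN)$. This gives the bound directly, with no new net and no tunable-$K$ problem. If you want to salvage your route you would need a replacement for Lemma~\ref{vector bound sym}/Lemma~\ref{single v bound} that works without symmetry and still delivers an arbitrary exponential rate $K$; the paper's maximizer trick is the cleaner way out.
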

\begin{proof}
Fix any $\varepsilon\in(0,1]$ and let $\xi'$ be an independent copy of $\xi$.
Then $\frac{1}{\sqrt{2}}(\xi-\xi')$ is symmetrically
distributed and $\Exp\bigl(\frac{1}{\sqrt{2}}(\xi-\xi')\bigr)^2=1$.
Let $N_{\ref{weak lsv sym}},w_{\ref{weak lsv sym}}$ from Lemma~\ref{weak lsv sym} be defined with respect to $\varepsilon$
and the distribution of $\frac{1}{\sqrt{2}}(\xi-\xi')$, and let
$N_{\ref{weak lsv nonsym}}$ be the smallest integer greater than 
$N_{\ref{weak lsv sym}}$  such that $\exp(w_{\ref{weak lsv sym}} N_{\ref{weak lsv nonsym}}/2)\ge \frac{4}{3}$.
Take any $N\ge N_{\ref{weak lsv nonsym}}$ and $n\le N$ and
let $A$ be an $N\times n$ random matrix with i.i.d.\ entries distributed as $\xi$, and $A'$ be an independent copy of $A$.
We can find a Borel function $f:\R^{N\times n}\to S^{n-1}$ such that for any $B\in\R^{N\times n}$ we have
$$\min\limits_{|I|\ge N-\varepsilon N}\|\Proj_I B f(B)\|
\ge\sup\limits_{y\in S^{n-1}}\min\limits_{|I|\ge N-\varepsilon N}\|\Proj_{I}By\|- 1$$
(the term ``$-1$'' above allows us to construct a piecewise constant function $f$, thus
avoiding any measurability questions).
Then we define a random vector $\tilde Y:\Omega\to S^{n-1}$ as $\tilde Y(\omega)=f(A(\omega))$.
Conditioning on $A$, we obtain
\begin{align*}
\P&\bigl\{\min\limits_{|I|\ge N-\varepsilon N}\|\Proj_{I}A\tilde Y\|
>(\sqrt{2}C_{\ref{weak lsv sym}}+2)\sqrt{N}\mbox{ {\bf and} }\|A'\tilde Y\|\le 2\sqrt{N}\bigr\}\\
&\ge \min\limits_{y\in S^{n-1}}\P\bigl\{\|A'y\|\le 2\sqrt{N}\bigr\}
\,\P\bigl\{\min\limits_{|I|\ge N-\varepsilon N}\|\Proj_{I}A\tilde Y\|>
(\sqrt{2}C_{\ref{weak lsv sym}}+2)\sqrt{N}\bigr\}\\
&\ge \frac{3}{4}\P\bigl\{\min\limits_{|I|\ge N-\varepsilon N}\|\Proj_{I}A\tilde Y\|>
(\sqrt{2}C_{\ref{weak lsv sym}}+2)\sqrt{N}\bigr\}.
\end{align*}
Hence, taking into consideration that the entries of $A-A'$ are distributed as $\xi-\xi'$ and
using Lemma~\ref{weak lsv sym}, we get
\begin{align*}
\P&\bigl\{\sup\limits_{y\in S^{n-1}}\min\limits_{|I|\ge N-\varepsilon N}\|\Proj_{I}Ay\|>
(\sqrt{2}C_{\ref{weak lsv sym}}+3)\sqrt{N}\bigr\}\\
&\le\P\bigl\{\min\limits_{|I|\ge N-\varepsilon N}\|\Proj_{I}A\tilde Y\|>
(\sqrt{2}C_{\ref{weak lsv sym}}+2)\sqrt{N}\bigr\}\\
&\le\frac{4}{3}\P\bigl\{\min\limits_{|I|\ge N-\varepsilon N}\|\Proj_{I}A\tilde Y\|>
(\sqrt{2}C_{\ref{weak lsv sym}}+2)\sqrt{N}\mbox{ and }\|A'\tilde Y\|\le 2\sqrt{N}\bigr\}\\
&\le\frac{4}{3}\P\bigl\{\min\limits_{|I|\ge N-\varepsilon N}\|\Proj_{I}(A-A')\tilde Y\|>
\sqrt{2}C_{\ref{weak lsv sym}}\sqrt{N}\bigr\}\\
&\le\frac{4}{3}\P\bigl\{\sup\limits_{y\in S^{n-1}}\min\limits_{|I|\ge N-\varepsilon N}\|\Proj_{I}(A-A')y\|>
\sqrt{2}C_{\ref{weak lsv sym}}\sqrt{N}\bigr\}\\
&\le\frac{4}{3}\exp(-w_{\ref{weak lsv sym}} N)\\
&\le\exp(-w_{\ref{weak lsv sym}} N/2).
\end{align*}
\end{proof}

\section{Matrix truncation and proof of Theorem~\ref{universal theor}}

In the next statement, we compare the $n$-th largest singular value of a random $N\times n$ matrix $A$ with bounded entries to
$s_n(\Proj_I A)$. Obviously,
$$s_n(\Proj_I A)\le s_n(A)\;\;\mbox{for any }I\subset\{1,2,\dots,N\}.$$
We will need an inequality in the opposite direction when $|I|/N\approx 1$.
A theorem of Litvak, Pajor, Rudelson and Tomczak-Jaegermann from \cite{LPRT} implies that for any $\delta>1$
and $M>0$ there are $h>0$ and $\varepsilon>0$ depending only on $\delta$ and $M$ with the following property:
whenever $N\ge\delta n$ and $A$ is an $N\times n$ random matrix with i.i.d.\ entries with mean zero,
variance one and a.s.\ bounded by $M$, we have
$$
\P\bigl\{\min\limits_{|I|\ge N-\varepsilon N}s_n(\Proj_I A)\ge h\sqrt{N}\bigr\}\ge 1-2\exp(\varepsilon N).
$$
This, together with an upper bound for $s_n(A)$, gives an estimate
$$s_n(A)\le L\min\limits_{|I|\ge N-\varepsilon N}s_n(\Proj_I A)$$
with a large probability, where $L>0$ depends only on $\delta$ and $M$. However,
such an estimate would be insufficient
for our needs, and we shall apply a more direct argument to get a stronger relation.

\begin{prop}\label{ssv of submatr}
Let $\xi$ be a random variable with zero mean such that $|\xi|\le M$ a.s.\ for some $M>0$. For any $\eta>0$ there are
$\varepsilon_{\ref{ssv of submatr}}=\varepsilon_{\ref{ssv of submatr}}(\eta,M)>0$
and $N_{\ref{ssv of submatr}}=N_{\ref{ssv of submatr}}(\eta,M)\in\N$
(both depending only on $\eta$ and $M$) with the following property:
whenever $N\ge N_{\ref{ssv of submatr}}$, $n\le N$ and $A=(a_{ij})$ is an $N\times n$ random matrix with i.i.d.\ entries
distributed as $\xi$, we have
$$
\P\bigl\{s_n(A)\le \min\limits_{|I|\ge N-\varepsilon_{\ref{ssv of submatr}} N}s_n(\Proj_I A)+\eta\sqrt{N}\bigr\}
\ge 1-\exp(-\varepsilon_{\ref{ssv of submatr}} N).
$$
\end{prop}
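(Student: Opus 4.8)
The plan is to evaluate the variational formula $s_n(A)^2=\min_{y\in S^{n-1}}\|Ay\|^2$ at one well-chosen test vector, using the orthogonal splitting $\|Ay\|^2=\|\Proj_I Ay\|^2+\|\Proj_{I^c}Ay\|^2$ (here $I^c=\{1,\dots,N\}\setminus I$). As in the proof of Proposition~\ref{weak lsv nonsym}, I fix a piecewise constant Borel map $g:\R^{N\times n}\to S^{n-1}$ with $\|Bg(B)\|^2\le s_n(B)^2+1$ for every $B$. Given $I$ with $|I^c|\le\varepsilon N$ (the value $\varepsilon=\varepsilon_{\ref{ssv of submatr}}$ is fixed at the end), I set $v_I:=g(\Proj_I A)$, so that $v_I$ depends only on the rows $(a_{ij})_{i\in I}$. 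Plugging $y=v_I$ into the splitting gives
$$s_n(A)^2\le\|\Proj_I Av_I\|^2+\|\Proj_{I^c}Av_I\|^2\le s_n(\Proj_I A)^2+1+\|\Proj_{I^c}Av_I\|^2 .$$
Since $\sqrt{a^2+b^2}\le a+b$, it suffices to show that with probability at least $1-\exp(-\varepsilon N)$ one has $\|\Proj_{I^c}Av_I\|^2\le\tfrac12\eta^2 N$ \emph{simultaneously for all} $I$ with $|I^c|\le\varepsilon N$ (then $\tfrac12\eta^2N+1\le\eta^2N$ for $N$ large). This is where a naive argument fails: one cannot bound $\|\Proj_{I^c}Ay\|$ uniformly over $y\in S^{n-1}$, since the submatrix $\Proj_{I^c}A$ may have operator norm of order $\sqrt n\approx\sqrt N$. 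The whole point is that $v_I$ is a single vector which is \emph{independent of the rows that were removed}.

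Next I would fix $I$ with $|I^c|=k\le\lfloor\varepsilon N\rfloor$ and condition on the $\sigma$-algebra generated by $(a_{ij})_{i\in I}$. Under this conditioning $v_I$ is a deterministic unit vector, whereas the rows $(a_{ij})_{i\in I^c}$ remain independent copies of the $\xi$-row, independent of the conditioning. Hence
$$\|\Proj_{I^c}Av_I\|^2=\sum_{i\in I^c}\Bigl(\sum_{j=1}^n a_{ij}v_{I,j}\Bigr)^2$$
is, conditionally, a sum of $k$ i.i.d.\ nonnegative random variables. Because $\|v_I\|=1$ and $|a_{ij}|\le M$ a.s., Lemma~\ref{Khintchine mod} yields $\P\{|\sum_j a_{ij}v_{I,j}|\ge\tau\}\le 2\exp(-c_{\ref{Khintchine mod}}\tau^2/M^2)$, so each summand is sub-exponential and $\Exp\exp(\lambda(\sum_j a_{ij}v_{I,j})^2)\le 3$ for $\lambda=c_{\ref{Khintchine mod}}/(2M^2)$. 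A standard Laplace-transform estimate (conditionally, then averaging over the conditioning) then gives
$$\P\bigl\{\|\Proj_{I^c}Av_I\|^2>\tfrac12\eta^2 N\bigr\}\le\exp\bigl(-\tfrac12\lambda\eta^2 N+k\ln 3\bigr)\le\exp\bigl(-c_{\ref{Khintchine mod}}\eta^2 N/(4M^2)+\varepsilon N\ln 3\bigr).$$

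It remains to union-bound over all $I$ with $|I^c|\le\lfloor\varepsilon N\rfloor$; there are at most $(\varepsilon N+1)\binom{N}{\lfloor\varepsilon N\rfloor}$ of them, and $\ln\binom{N}{\lfloor\varepsilon N\rfloor}\le\varepsilon\ln(e/\varepsilon)\,N\to 0$ relative to $N$ as $\varepsilon\to0$, while the per-$I$ exponent above is of order $\eta^2 N/M^2$. Thus, choosing $\varepsilon=\varepsilon_{\ref{ssv of submatr}}(\eta,M)>0$ small enough (so that $\varepsilon\ln(3e/\varepsilon)\le c_{\ref{Khintchine mod}}\eta^2/(8M^2)$ and $\varepsilon\le c_{\ref{Khintchine mod}}\eta^2/(16M^2)$) and $N_{\ref{ssv of submatr}}(\eta,M)$ large enough to absorb the polynomial factor $\varepsilon N+1$ and the additive $1$ above, the total failure probability is at most $\exp(-\varepsilon_{\ref{ssv of submatr}}N)$. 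On the complementary event $s_n(A)^2\le s_n(\Proj_I A)^2+\eta^2N$ for every $I$ with $|I^c|\le\varepsilon N$, hence $s_n(A)\le s_n(\Proj_I A)+\eta\sqrt N$ for all such $I$, which gives $s_n(A)\le\min_{|I|\ge N-\varepsilon_{\ref{ssv of submatr}}N}s_n(\Proj_I A)+\eta\sqrt N$, as required. The single delicate point is the one isolated in the first paragraph — one must not control $\|\Proj_{I^c}Ay\|$ uniformly in $y$ (the submatrix norm is of order $\sqrt n$, far too large), but only at the vector $v_I$, which is independent of the removed rows; after that, everything is a one-parameter Laplace-transform bound for a sum of at most $\varepsilon N$ sub-exponential variables weighed against the binomial entropy $\ln\binom{N}{\varepsilon N}$, together with the piecewise constant measurable selection of $v_I$ handled exactly as in Proposition~\ref{weak lsv nonsym}.
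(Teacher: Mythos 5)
Your proof is correct and is essentially the paper's argument: you exploit the same key observation (the near-minimizing vector $v_I$ for $\Proj_I A$ is independent of the removed rows $I^c$), the same Laplace-transform bound via Lemma~\ref{Khintchine mod} with $\lambda=c_{\ref{Khintchine mod}}/(2M^2)$, and the same trade-off between the binomial entropy $\ln\binom{N}{\lfloor\varepsilon N\rfloor}$ and the exponent $\lambda\eta^2 N$. The only cosmetic difference is that the paper phrases the union bound contrapositively (assume failure, pigeonhole to a single bad $I_0$), while you take the union bound directly; these are logically identical.
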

\begin{proof}
Fix any $\eta>0$, let $\varepsilon=\varepsilon_{\ref{ssv of submatr}}(\eta,M)$ be the largest number in $(0,1]$ satisfying
$$\frac{c_{\ref{Khintchine mod}}}{2M^2}\eta^2\ge\varepsilon\bigl(1+\ln\frac{6e}{\varepsilon}\bigr),$$
and $N_{\ref{ssv of submatr}}\in\N$ be the smallest number such that $N-\lceil N-\varepsilon N\rceil\ge\varepsilon N/2$
for all $N\ge N_{\ref{ssv of submatr}}$.

Let $N\ge N_{\ref{ssv of submatr}}$,
$n\le N$ and $A$ be an $N\times n$ random matrix defined as above. We shall prove the statement by contradiction.
Let us assume that
$$
\P\bigl\{s_n(A)>\min\limits_{|I|\ge N-\varepsilon N}s_n(\Proj_I A)+\eta\sqrt{N}\bigr\}>\exp(-\varepsilon N).
$$
Cardinality of the set $T=\bigl\{I\subset\{1,2,\dots,N\}:\,|I|=\lceil N-\varepsilon N\rceil\bigr\}$
can be estimated as
$$|T|\le {N\choose \lceil N-\varepsilon N\rceil}
\le\Bigl(\frac{eN}{N-\lceil N-\varepsilon N\rceil}\Bigr)^{N-\lceil N-\varepsilon N\rceil}
\le \Bigl(\frac{2e}{\varepsilon}\Bigr)^{\varepsilon N}.$$
Hence, our assumption implies that there is a set $I_0\in T$ such that
\begin{equation}\label{ssv of submatr 1}
\P\bigl\{s_n(A)>s_n(\Proj_{I_0} A)+\eta\sqrt{N}\bigr\}\\
>\exp(-\varepsilon N)\Bigl(\frac{2e}{\varepsilon}\Bigr)^{-\varepsilon N}.
\end{equation}
Let $f:\R^{N\times n}\to S^{n-1}$ be a Borel function such that for every $B\in\R^{N\times n}$,
$f(B)\in S^{n-1}$ is an eigenvector of $B^T B$ corresponding to its smallest eigenvalue. So, we have $\|Bf(B)\|=s_n(B)$.
Then we define a random vector $\tilde Y:\Omega\to S^{n-1}$ as $\tilde Y(\omega)=f(\Proj_{I_0}A(\omega))$.
It is not difficult to see that such a definition implies that $\tilde Y$ and $a_{ij}$ ($i\notin I_0$, $1\le j\le n$)
are jointly independent. Hence,
\begin{align*}
\P\bigl\{s_n(A)>s_n(\Proj_{I_0} A)+\eta\sqrt{N}\bigr\}
&\le \P\bigl\{\|A\tilde Y\|>\|\Proj_{I_0} A\tilde Y\|+\eta\sqrt{N}\bigr\}\\
&\le \P\bigl\{\|\Proj_{\{1,2,\dots,N\}\setminus I_0} A\tilde Y\|>\eta\sqrt{N}\bigr\}\\
&\le \sup\limits_{y\in S^{n-1}}\P\bigl\{\|\Proj_{\{1,2,\dots,N\}\setminus I_0} Ay\|>\eta\sqrt{N}\bigr\}.
\end{align*}
Now, for every $y=(y_1,y_2,\dots,y_n)\in S^{n-1}$, Lemma~\ref{Khintchine mod} and
the standard procedure with the Laplace transform
give for $\lambda=\frac{c_{\ref{Khintchine mod}}}{2M^2}$:
\begin{align*}
\P&\bigl\{\|\Proj_{\{1,2,\dots,N\}\setminus I_0} Ay\|>\eta\sqrt{N}\bigr\}\\
&=\P\Bigl\{\sum\limits_{i\notin I_0}\Bigl(\sum\limits_{j=1}^n a_{ij}y_j\Bigr)^2>\eta^2 N\Bigr\}\\
&\le\frac{\Bigl(\Exp\exp\bigl(\lambda\bigl(\sum_{j=1}^n a_{1j}y_j\bigr)^2\bigr)\Bigr)^{N-\lceil N-\varepsilon N\rceil}}{\exp(\lambda\eta^2 N)}\\
&=\exp(-\lambda\eta^2 N)\Bigl(1+\int_1^\infty\P\Bigl\{\Bigl|\sum_{j=1}^n a_{1j}y_j\Bigr|
\ge\sqrt{\ln\tau/\lambda}\Bigr\}\,d\tau\Bigr)^{N-\lceil N-\varepsilon N\rceil}\\
&\le\exp(-\lambda\eta^2 N)\Bigl(1+2\int_1^\infty
\exp\Bigl(-\frac{c_{\ref{Khintchine mod}}\ln\tau}{\lambda M^2}\Bigr)\,d\tau\Bigr)^{N-\lceil N-\varepsilon N\rceil}\\
&=\exp(-\lambda\eta^2 N)\,3^{N-\lceil N-\varepsilon N\rceil}\\
&\le \exp\bigl(-\lambda\eta^2 N+\varepsilon N\ln 3\bigr).
\end{align*}
Together with \eqref{ssv of submatr 1}, the last estimate implies
$$-\lambda\eta^2+\varepsilon\ln 3>-\varepsilon -\varepsilon\ln\frac{2e}{\varepsilon}.$$
However, this contradicts to our choice of $\varepsilon$. Thus, the initial assumption was wrong,
and the statement is proved.
\end{proof}

Let $\xi$ be a random variable with zero mean.
Then for any $M>0$ we call the variable
$$\xi\chi_{\{|\xi|\le M\}}-\Exp(\xi\chi_{\{|\xi|\le M\}})$$
{\it the centered $M$-truncation of $\xi$}. Here,
$\chi_{\{|\xi|\le M\}}$ is the indicator of the event $\bigl\{\omega\in\Omega:\,|\xi(\omega)|\le M\bigr\}$.

Denote $\tilde\xi_M=\xi\chi_{\{|\xi|\le M\}}-\Exp(\xi\chi_{\{|\xi|\le M\}})$ and
$\theta_M=\xi-\tilde\xi_M=\xi\chi_{\{|\xi|> M\}}+\Exp(\xi\chi_{\{|\xi|\le M\}})$.
Obviously, $\Exp\tilde\xi_M=\Exp\theta_M=0$ and $|\tilde\xi_M|\le 2M$ everywhere on $\Omega$ for any $M>0$.
Further, if the second moment of $\xi$ is bounded then
\begin{align*}
&\Exp{\tilde\xi_M}^2=\Exp(\xi\chi_{\{|\xi|\le M\}})^2-\bigl(\Exp(\xi\chi_{\{|\xi|\le M\}})\bigr)^2
\longrightarrow \Exp\xi^2\;\;\mbox{and}\\
&\Exp{\theta_M}^2=\Exp\xi^2-2\Exp\bigl(\xi^2\chi_{\{|\xi|\le M\}}\bigr)
+\Exp{\tilde\xi_M}^2\longrightarrow 0\;\;\mbox{when}\;M\to\infty.
\end{align*}

\begin{theor}\label{trunc ssv theor}
Let $\xi$ be a random variable with zero mean and unit variance. For any $M>0$ and $\eta>0$ there
are $N_{\ref{trunc ssv theor}}\in\N$ depending on $M,\eta$ and the distribution of $\xi$, and $w_{\ref{trunc ssv theor}}>0$
depending only on $M$ and $\eta$ with the following property:
Let $N\ge N_{\ref{trunc ssv theor}}$, $n\le N$ and let $A=(a_{ij})$ be an $N\times n$ random matrix with i.i.d.\ entries distributed
as $\xi$. Further, let $\tilde A$ be an $N\times n$ matrix with the entries
$\tilde a_{ij}=a_{ij}\chi_{\{|a_{ij}|\le M\}}-\Exp(a_{ij}\chi_{\{|a_{ij}|\le M\}})$ and
denote $\theta=\xi\chi_{\{|\xi|> M\}}+\Exp(\xi\chi_{\{|\xi|\le M\}})$.
Then
$$\P\bigl\{s_n(A)\ge s_n(\tilde A)-\eta\sqrt{N}
-C_{\ref{weak lsv nonsym}}\sqrt{N\Exp\theta^2}\bigr\}
\ge 1-\exp(-w_{\ref{trunc ssv theor}}N).$$
\end{theor}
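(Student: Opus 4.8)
The plan is to write $A = \tilde A + \Theta$, where $\Theta = (\theta_{ij})$ has i.i.d.\ entries distributed as $\theta$, and to bound $s_n(A)$ from below by $s_n(\tilde A)$ minus two error terms: one coming from the fact that we pass to a coordinate submatrix (controlled by Proposition~\ref{ssv of submatr}), and one coming from the ``noise'' matrix $\Theta$ (controlled by Proposition~\ref{weak lsv nonsym}). The key point, and the reason we work with the non-convex quantity $\sup_{y}\min_{|I|\ge N-\varepsilon N}\|\Proj_I \Theta y\|$ instead of $\|\Theta\|$, is that the second moment assumption alone is far too weak to bound $\|\Theta\|$ by $O(\sqrt{N})$, but the non-convex replacement \emph{does} concentrate near $\sqrt{N\,\Exp\theta^2}$.

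Here is the order of the steps. First, fix $\varepsilon = \varepsilon_{\ref{ssv of submatr}}(\eta/2, 2M)$ from Proposition~\ref{ssv of submatr} applied to the centered $M$-truncation $\tilde\xi$ (which satisfies $|\tilde\xi|\le 2M$ a.s., $\Exp\tilde\xi=0$, and $\Exp\tilde\xi^2\le 1$); this gives, for $N$ large, an event $\mathcal E_1$ of probability $\ge 1-\exp(-\varepsilon N)$ on which
$$s_n(\tilde A) \le \min_{|I|\ge N-\varepsilon N} s_n(\Proj_I \tilde A) + \tfrac{\eta}{2}\sqrt{N}.$$
Second, apply Proposition~\ref{weak lsv nonsym} to the matrix $\Theta$ with this same $\varepsilon$; after rescaling $\theta$ to have unit variance one gets, for $N$ large, an event $\mathcal E_2$ of probability $\ge 1-\exp(-w_{\ref{weak lsv nonsym}}(\varepsilon) N)$ on which
$$\sup_{y\in S^{n-1}}\min_{|I|\ge N-\varepsilon N}\|\Proj_I \Theta y\| \le C_{\ref{weak lsv nonsym}}\sqrt{N\,\Exp\theta^2}.$$
Third, work deterministically on $\mathcal E_1\cap\mathcal E_2$. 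Let $I^* $ with $|I^*|\ge N-\varepsilon N$ realize (up to a negligible slack) the minimum $\min_{|I|\ge N-\varepsilon N} s_n(\Proj_I \tilde A)$, and let $y\in S^{n-1}$ be a unit vector with $\|\Proj_{I^*}\tilde A y\| = s_n(\Proj_{I^*}\tilde A)$. By $\mathcal E_2$ there is a further set $I'$ with $|I'|\ge N-\varepsilon N$ and $\|\Proj_{I'}\Theta y\|\le C_{\ref{weak lsv nonsym}}\sqrt{N\,\Exp\theta^2}$. Then on $J = I^*\cap I'$ (which still has $|J|\ge N-2\varepsilon N$), by the triangle inequality,
$$s_n(\Proj_J A) \le \|\Proj_J A y\| \le \|\Proj_J \tilde A y\| + \|\Proj_J \Theta y\| \le s_n(\Proj_{I^*}\tilde A) + C_{\ref{weak lsv nonsym}}\sqrt{N\,\Exp\theta^2}.$$
Combining with $\mathcal E_1$ and $s_n(\Proj_J A)\le s_n(\Proj_J \tilde A)$ gives an upper bound on a coordinate-submatrix singular value of $A$ in terms of $s_n(\tilde A)$, and one more application of Proposition~\ref{ssv of submatr}, this time to $A$ with the truncation level absorbed appropriately (or a direct monotonicity step), upgrades this to $s_n(A)$ itself at the cost of another $\frac{\eta}{2}\sqrt{N}$.

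The slightly delicate bookkeeping step is matching the $\varepsilon$'s: Proposition~\ref{ssv of submatr} and Proposition~\ref{weak lsv nonsym} each want to choose their own $\varepsilon$, and the deterministic step intersects two sets and so needs the total deficiency $2\varepsilon N$ (or $3\varepsilon N$) to still be of the form $\varepsilon' N$ with $\varepsilon'$ small enough for the \emph{other} proposition; one fixes this by starting from the smallest of the required $\varepsilon$'s and shrinking. The other point requiring care is that $\tilde\xi$ has variance $\le 1$ rather than exactly $1$, so Proposition~\ref{ssv of submatr} should be invoked for the variable $\tilde\xi$ directly (its statement only needs boundedness and mean zero, not unit variance — good), while Proposition~\ref{weak lsv nonsym} is applied to $\theta/\sqrt{\Exp\theta^2}$, which is legitimate as long as $\Exp\theta^2>0$; the edge case $\Exp\theta^2=0$ (i.e.\ $\xi$ bounded and already centered-truncation-invariant) is trivial since then $A=\tilde A$. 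Finally one takes $N_{\ref{trunc ssv theor}}$ to be the max of all the thresholds invoked and $w_{\ref{trunc ssv theor}} = \tfrac12\min(\varepsilon, w_{\ref{weak lsv nonsym}}(\varepsilon))$ so that $\exp(-\varepsilon N)+\exp(-w_{\ref{weak lsv nonsym}}(\varepsilon)N)\le \exp(-w_{\ref{trunc ssv theor}}N)$ for $N$ large. I expect the main obstacle to be purely organizational — lining up the three events and the shrinking $\varepsilon$'s — rather than any genuinely new estimate, since all the probabilistic content is already packaged in the two propositions.
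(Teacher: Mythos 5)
There is a genuine gap, and it is in the deterministic step. You choose $y$ to be a minimizing vector for $\Proj_{I^*}\tilde A$, and from the triangle inequality you obtain an \emph{upper} bound on $s_n(\Proj_J A)$ for a submatrix $\Proj_J A$. But $s_n(\Proj_J A)\le s_n(A)$ holds trivially for any $J$, so an upper bound on a submatrix singular value places no constraint at all on $s_n(A)$ from below, which is what the theorem asserts. The parenthetical hope that ``a direct monotonicity step'' upgrades this is precisely the monotonicity that goes the wrong way. Nor can you rescue it by ``one more application of Proposition~\ref{ssv of submatr} to $A$'': that proposition requires i.i.d.\ entries bounded a.s.\ by a constant, and the whole point of the truncation is that the entries of $A$ are unbounded. (Also, the inequality $s_n(\Proj_J A)\le s_n(\Proj_J\tilde A)$ you use in passing is not true in general; there is no ordering between the singular values of $\Proj_J A$ and $\Proj_J\tilde A$.)

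The fix is to pick $y$ on the other side: take $y\in S^{n-1}$ with $\|Ay\|=s_n(A)$, i.e.\ the minimizer for the \emph{untruncated} matrix, which is the quantity you need to bound from below. On $\mathcal E_2$ there is $I'$ with $|I'|\ge N-\varepsilon N$ and $\|\Proj_{I'}\Theta y\|\le C_{\ref{weak lsv nonsym}}\sqrt{N\Exp\theta^2}$, and then
$$s_n(\Proj_{I'}\tilde A)\le\|\Proj_{I'}\tilde A y\|\le\|\Proj_{I'}Ay\|+\|\Proj_{I'}\Theta y\|\le\|Ay\|+C_{\ref{weak lsv nonsym}}\sqrt{N\Exp\theta^2}=s_n(A)+C_{\ref{weak lsv nonsym}}\sqrt{N\Exp\theta^2},$$
so $\min_{|I|\ge N-\varepsilon N}s_n(\Proj_I\tilde A)\le s_n(A)+C_{\ref{weak lsv nonsym}}\sqrt{N\Exp\theta^2}$, which combined with $\mathcal E_1$ gives $s_n(\tilde A)\le s_n(A)+\eta\sqrt{N}+C_{\ref{weak lsv nonsym}}\sqrt{N\Exp\theta^2}$. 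With this orientation there is no need to intersect $I^*$ and $I'$, no $2\varepsilon$ bookkeeping, and in particular no second application of Proposition~\ref{ssv of submatr}: the \emph{same} $\varepsilon=\varepsilon_{\ref{ssv of submatr}}(\eta,2M)$ is fed into both propositions, exactly as the paper does. Your identification of the two propositions as the probabilistic inputs, the rescaling of $\theta$ to unit variance, the handling of the degenerate case $\Exp\theta^2=0$, and the choice of $w_{\ref{trunc ssv theor}}$ are all fine.
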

\begin{proof}
Fix any $M>0$ and $\eta>0$ and let $\theta$ be as above.
We will assume that $\P\{\theta=0\}<1$; otherwise the truncation leaves the variable unchanged and
there is nothing to prove. Let $N_{\ref{ssv of submatr}}=N_{\ref{ssv of submatr}}(\eta,2M)$
and $\varepsilon=\varepsilon_{\ref{ssv of submatr}}(\eta,2M)$ be taken from Proposition~\ref{ssv of submatr}.
Let also $N_{\ref{weak lsv nonsym}}$ and $w_{\ref{weak lsv nonsym}}$ be defined as in Proposition~\ref{weak lsv nonsym}
with respect to $\varepsilon$ and the distribution of the ``normalized tail'' $\theta/\sqrt{\Exp\theta^2}$. Now, let
$N_{\ref{trunc ssv theor}}$ be the smallest integer greater than $\max(N_{\ref{ssv of submatr}},N_{\ref{weak lsv nonsym}})$
such that for all $N\ge N_{\ref{trunc ssv theor}}$ we have
$$\exp(-\varepsilon N)+\exp(-w_{\ref{weak lsv nonsym}} N)
\le\exp\bigl(-\min(\varepsilon/2,w_{\ref{weak lsv nonsym}}/2)N\bigr).$$

Take any $N\ge N_{\ref{trunc ssv theor}}$, $n\le N$, and let $A,\tilde A$ be as stated above.
By Proposition~\ref{ssv of submatr}, we have
$$
\P\bigl\{s_n(\tilde A)>
\min\limits_{|I|\ge N-\varepsilon N}s_n(\Proj_I \tilde A)+\eta\sqrt{N}\bigr\}\le \exp(-\varepsilon N),
$$
and, by Proposition~\ref{weak lsv nonsym},
$$\P\bigl\{\sup\limits_{y\in S^{n-1}}\min\limits_{|I|\ge N-\varepsilon N}\|\Proj_{I}(A-\tilde A)y\|>
C_{\ref{weak lsv nonsym}}\sqrt{N\Exp\theta^2}\bigr\}
\le\exp(-w_{\ref{weak lsv nonsym}} N).$$
Combining the two relations, we get
\begin{align*}
\P&\bigl\{s_n(A)< s_n(\tilde A)-\eta\sqrt{N}-C_{\ref{weak lsv nonsym}}\sqrt{N\Exp\theta^2}\bigr\}\\
&\le\P\bigl\{s_n(\tilde A)>\min\limits_{|I|\ge N-\varepsilon N}s_n(\Proj_I \tilde A)+\eta\sqrt{N}\bigr\}\\
&\hspace{0.5cm}
+\P\bigl\{s_n(A)< \min\limits_{|I|\ge N-\varepsilon N}s_n(\Proj_I \tilde A)
-C_{\ref{weak lsv nonsym}}\sqrt{N\Exp\theta^2}\bigr\}\\
&\le \exp(-\varepsilon N)\\
&\hspace{0.5cm}
+\P\bigl\{\exists y\in S^{n-1}:\;\min\limits_{|I|\ge N-\varepsilon N}s_n(\Proj_I \tilde A)-\|Ay\|
> C_{\ref{weak lsv nonsym}}\sqrt{N\Exp\theta^2}\bigr\}\\
&\le \exp(-\varepsilon N)\\
&\hspace{0.5cm}
+\P\bigl\{\exists y\in S^{n-1}:\;\min\limits_{|I|\ge N-\varepsilon N}(\|\Proj_I \tilde Ay\|-\|\Proj_I A y\|)
> C_{\ref{weak lsv nonsym}}\sqrt{N\Exp\theta^2}\bigr\}\\
&\le \exp(-\varepsilon N)+\exp(-w_{\ref{weak lsv nonsym}} N)\\
&\le\exp\bigl(-\min(\varepsilon/2,w_{\ref{weak lsv nonsym}}/2)N\bigr).
\end{align*}
\end{proof}

\begin{proof}[Proof of Theorem~\ref{universal theor}]
Let $\{a_{ij}\}$ $(1\le i,j<\infty)$ be a two-dimensional array of i.i.d.\ random variables with zero mean and unit variance and
let $(N_m)_{m=1}^\infty$ be an integer sequence satisfying $m/N_m\longrightarrow z$ for some $z\in(0,1)$.
Recall that for every $m\in\N$, $A_{m}$ denotes the random $N_m\times m$ matrix with entries $a_{ij}$
$(1\le i\le N_m,1\le j\le m)$.
The Mar\v cenko--Pastur law (see Theorem~\ref{Mar Pas} and Remark~\ref{Mar Pas rem}) implies that
$$\limsup\limits_{m\to\infty}\frac{s_{m}(A_{m})}{\sqrt{N_m}}\le 1-\sqrt{z}\;\;\mbox{almost surely}.$$
Thus, it suffices to prove the lower estimate
$$\liminf\limits_{m\to\infty}\frac{s_{m}(A_{m})}{\sqrt{N_m}}\ge 1-\sqrt{z}\;\;\mbox{a.s.}$$

Now, choose arbitrary $\eta>0$ and let $M>0$ be such that
\begin{align*}
&\Exp\bigl(a_{11}\chi_{\{|a_{11}|\le M\}}-\Exp(a_{11}\chi_{\{|a_{11}|\le M\}})\bigr)^2\ge (1-\eta)^2\;\;\mbox{and}\\
&\Exp\bigl(a_{11}\chi_{\{|a_{11}|> M\}}+\Exp(a_{11}\chi_{\{|a_{11}|\le M\}})\bigr)^2\le \eta^2.
\end{align*}
For every $m\in\N$, let $\tilde A_m$ be the $N_m\times m$ matrix of truncated and centered variables
$\tilde a_{ij}=a_{ij}\chi_{\{|a_{ij}|\le M\}}-\Exp(a_{ij}\chi_{\{|a_{ij}|\le M\}})$ ($1\le i\le N_m,1\le j\le m$).
Theorem~\ref{trunc ssv theor} and the conditions on the sequence $(N_m)_{m=1}^\infty$ imply that
there are $m_0\in\N$ and $w>0$ such that for all $k\ge m_0$
$$\P\bigl\{s_m(A_{m})\ge s_m(\tilde A_m)-(1+C_{\ref{weak lsv nonsym}})\eta\sqrt{N_m}
\mbox{ for all }m\ge k\bigr\}\ge 1-\sum\limits_{m=k}^\infty\exp(-wN_m),$$
where the quantity on the right-hand side goes to $1$ as $k$ tends to infinity.
Hence, we obtain
$$\P\Bigl\{\liminf\limits_{m\to\infty} \frac{s_m(A_{m})}{\sqrt{N_m}}
\ge\liminf\limits_{m\to\infty}\frac{s_m(\tilde A_m)}{\sqrt{N_m}}-(1+C_{\ref{weak lsv nonsym}})\eta\Bigr\}=1.$$
On the other hand, the theorem of Bai and Yin \cite{BY} implies that 
$$\lim\limits_{m\to\infty}\frac{s_{m}\bigl(\tilde A_{m})}{\sqrt{N_m}}\ge (1-\eta)(1-\sqrt{z})\;\;\mbox{a.s.}$$
Thus, we come to the estimate
$$\liminf\limits_{m\to\infty}\frac{s_m(A_{m})}{\sqrt{N_m}}\ge (1-\eta)(1-\sqrt{z})-(1+C_{\ref{weak lsv nonsym}})\eta\;\;\mbox{a.s.}$$
Since $\eta>0$ was arbitrary, this proves the result.
\end{proof}

\bigskip

{\bf Acknowledgement.}
I would like to thank my supervisor Dr. Nicole~Tomczak-Jaegermann for support and
for valuable suggestions on the text.

\end{document}